\newcommand{\n}{\ding{55}}
\newtheorem{example}{Example}[section]
\newtheorem{Lemma}{Lemma}[section]
\newtheorem{remark}{Remark}[section]
\title{Method of Successive Projections for Nonnegative Inverse Singular Value problems with Prescribed Structure
\\
}
\author{
Sheng-Jhih Wu\thanks{Institute of Mathematics, Academia Sinica, 6F, Astronomy-Mathematics Building, No.1, Sec. 4, Roosevelt Road, Taipei 10617, Taiwan. (sjw@math.sinica.edu.tw).}
\and Matthew M. Lin \thanks{Corresponding author. Department of Mathematics, National Chung Cheng University, Chia-Yi 621, Taiwan. (mhlin@ccu.edu.tw).
This
research was supported in part by the National Science Council of Taiwan under grant101-2115-M-194-007-MY3}
}
\begin{document}

\maketitle

\begin{abstract}
The study of solving  inverse singular value problems for nonnegative matrices has been around for decades. 
It is clear that an inverse singular problem is trivial if the desirable matrix is not restricted to a certain structure. Provided with singular values
and diagonal entries, this paper presents a numerical procedure, based on the successive projection process, to solve inverse singular value problems  for nonnegative matrices subject to given diagonal entries.
Even if we focus on the specific type of inverse singular value problems
with prescribed diagonal entries, this entire procedure can be carried over with little effort to other types of structure.   Numerical examples are used to demonstrate the capacity and efficiency of our method.

%

\begin{AMS}
15A29, 65H18
\end{AMS}

\begin{keywords}
inverse singular value problem, nonnegative matrices, alternating projections
\end{keywords}

\end{abstract}

\section{Introduction}
For an $n\times n$ complex matrix $A$, the singular values of $A$ 
are the non-negative square roots of the eigenvalues of $A^*A$, 
where the matrix $A^*$ is the conjugate transpose of $A$.
The inverse singular value problems (ISVP) deals with the reconstruction of 
a matrix from certain prescribed  singular values. 
In general, the ISVP is a natural extension of the inverse eigenvalue problem (IEP)~\cite{Chu05,Orsi2006} since eigenvalues and singular values are two of the most important, closely related characteristics of a matrix.
Nevertheless, the very intimate relation between these two quantities 
does not mean that the ISVP is a trivial generalization of the IEP.
Though, as noted in \cite{Chu05} and described below, there is a one-to-one correspondence between the solvability
of these two types of inverse problems. That is, 
if $A$ is a matrix given by 
\begin{equation} \label{corresponding solvability}
A = \left[\begin{array}{cc} 0 & B \\  B^{T} & 0
\end{array}\right],
\end{equation}
where $B$ is some structured matrix, then the ISVP for $B$ is solvable if only if  the IEP for $A$ is solvable.
However, solving the ISVP indirectly by the IEP approaches might suffer difficulty due to the extra block diagonal zeros appearing in (\ref{corresponding solvability}). Thus, on how to solve the ISVP directly has received growing attention in the matrix theory community in the recent years.
The reader is referred to \cite{Bai2012,Bai2007a,Bai2008,Chan2003,Chu1992,Chu1999,Chu2000,Ma2012,Montano2008a,Montano2008b,Vong2011,Yuan2012}.
Besides of its own theoretical interest, the ISVP have practical applications 
such as in some quadratic group~\cite{Politi2003}, 
in enforcing passivity of rational macromodels~\cite{Saunders2011},
and in signature sequences design~\cite{Tropp2004} .

Among the various settings for the ISVP in the literature, a scenario worthy of discussion is when a structured matrix is further restricted by prescribed entries. A necessary condition for the existence of a matrix with given diagonal entries $\boldsymbol{d}$ and singular values $\boldsymbol{\sigma}$ is first 
given by de Oliviera~\cite{Oliveira1971}. 
See also Sing~\cite{Sing1976} and Miranda~\cite{Miranda1979} for more general necessary conditions. While considering the necessary and sufficient conditions, a classical result is the Sing-Thompson theorem.
\begin{theorem} \label{thm:ST}
(Sing-Thompson Theorem \cite{Sing1976,Thompson1977}) There exists a real matrix $A \in \mathbb{R}^{n \times n}$
with
singular values $\boldsymbol{\sigma}$ and main diagonal entries $\boldsymbol{d}$, possibly
in different order, if and only if
\begin{equation}
\sum_{i=1}^{k} |d_{i}| \leq \sum_{i=1}^{k}\sigma_{i}
\end{equation}
for all $k=1,2,\ldots ,n$ and
\begin{equation}
\sum_{i=1}^{n-1} |d_{i}| - |d_{n}| \leq \sum_{i=1}^{n-1}\sigma_{i} - \sigma_{n}.
\end{equation}
\end{theorem}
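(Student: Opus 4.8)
The plan is to prove the two implications separately: necessity through singular-value trace inequalities, and sufficiency through an inductive, constructive reduction in the size of the matrix. Throughout I assume the data are ordered so that $\sigma_1 \geq \sigma_2 \geq \cdots \geq \sigma_n \geq 0$ and $|d_1| \geq |d_2| \geq \cdots \geq |d_n|$, since both the hypotheses and the conclusion are invariant under such reordering.

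For the necessity, suppose $A = U\Sigma V^{T}$ realizes the prescribed data, with $\Sigma = \mathrm{diag}(\sigma_1,\ldots,\sigma_n)$. The key observation is that for any diagonal sign matrix $E = \mathrm{diag}(\epsilon_1,\ldots,\epsilon_n)$ with $\epsilon_i \in \{+1,-1\}$ one has $\sum_{i}\epsilon_i d_i = \mathrm{tr}(EA) = \mathrm{tr}(\Sigma W)$, where $W := V^{T} E U$ is orthogonal with $\det W = \det E$. I would then invoke two extremal facts about the functional $W \mapsto \mathrm{tr}(\Sigma W)$ on the orthogonal group: its maximum over $\det W = +1$ equals $\sum_{i=1}^{n}\sigma_i$, while its maximum over $\det W = -1$ equals $\sum_{i=1}^{n-1}\sigma_i - \sigma_n$, the latter attained at $\mathrm{diag}(1,\ldots,1,-1)$. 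Choosing $E$ to match the signs of $d_i$ on a top-$k$ block (and using the analogous Ky Fan principle $\sum_{i=1}^{k}\sigma_i = \max\, \mathrm{tr}(P^{T} A Q)$ over $P,Q$ with $k$ orthonormal columns) yields the first family $\sum_{i=1}^{k}|d_i| \leq \sum_{i=1}^{k}\sigma_i$, while flipping the sign at index $n$ so that $\det E = -1$ delivers the extra inequality.

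For the sufficiency I would argue by induction on $n$. The base case $n=1$ forces $|d_1|=\sigma_1$ and is settled by $A=[\pm\sigma_1]$. For the inductive step the aim is to peel off one coordinate: using a single plane rotation acting on a suitable pair of singular values, one can realize a prescribed diagonal entry $d$ lying in an admissible window of the form $\sigma_n \leq |d| \leq \sigma_1$, leaving behind a residual singular value. After an orthogonal change of basis that fixes this entry in one position, the remaining $(n-1)\times(n-1)$ block carries the updated singular values together with the leftover diagonal data, and the crux is to check that these still satisfy the two Sing--Thompson inequality families in dimension $n-1$, so that the inductive hypothesis applies.

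I expect the principal obstacle to be precisely this verification in the inductive step of sufficiency: one must choose which singular value to contract and which diagonal entry to fix so that the decremented partial sums remain correctly ordered and continue to obey both inequality families, a delicate piece of bookkeeping complicated by the ``possibly in different order'' freedom and by the signs of the $d_i$. A secondary subtlety, already visible on the necessity side, is that the natural sign matrix $E$ producing $\sum_{i=1}^{n-1}|d_i| - |d_n|$ may have determinant $+1$ rather than $-1$; this parity issue is resolved by a short case analysis, comparing against the fully sign-matched matrix $\mathrm{diag}(\mathrm{sign}(d_1),\ldots,\mathrm{sign}(d_n))$ and using $|d_n| \geq 0$ to recover the desired bound.
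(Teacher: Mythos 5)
The paper does not prove this theorem at all --- it is quoted as a classical result with citations to Sing (1976) and Thompson (1977) --- so there is no in-paper proof to compare against; your proposal has to stand on its own, and judged that way it is half a proof. The necessity direction is essentially complete and correct. One slip: from $A=U\Sigma V^{\top}$ you get $W=V^{\top}EU$ with $\det W=\det E\,\det U\,\det V$, not $\det W=\det E$, since nothing forces $\det U\det V=1$. This does not sink the argument, because your own parity fix is robust to it: flipping $\epsilon_n$ flips $\det W$ whatever $\det U\det V$ is, so exactly one of the two sign matrices lands in the $\det W=-1$ component, where $\max\operatorname{trace}(\Sigma W)=\sum_{i=1}^{n-1}\sigma_i-\sigma_n$, and the comparison $\sum_{i<n}|d_i|-|d_n|\leq\sum_{i=1}^{n}|d_i|$ finishes the bad case. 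The Ky Fan argument for the first family is also fine.

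The sufficiency direction, however, is a plan rather than a proof, and the part you defer is precisely the substantive content of Thompson's theorem. You say: peel off one diagonal entry via a plane rotation acting on a pair of singular values, then ``check that these still satisfy the two Sing--Thompson inequality families in dimension $n-1$, so that the inductive hypothesis applies'' --- and you explicitly flag this verification as the principal obstacle without carrying it out. But that bookkeeping is not routine: one must (i) decide which pair $(\sigma_p,\sigma_q)$ to mix and which $d_j$ to realize, (ii) compute the residual singular value $\tau$ (from $\sigma_p\sigma_q=|d_j|\,\tau$ or $\sigma_p^2+\sigma_q^2=d_j^2+\tau^2$, depending on the construction), (iii) re-sort the reduced list and verify all $k$-partial-sum inequalities \emph{and} the signed inequality with its $-\sigma_n$ and $-|d_n|$ terms, and this genuinely splits into cases (e.g.\ according to whether $|d_1|\geq\sigma_n$ or various partial sums are tight) --- this case analysis occupies the bulk of Thompson's original paper. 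Even the ``admissible window'' $\sigma_n\leq|d|\leq\sigma_1$ you posit is not justified as stated: a rotation applied to $\operatorname{diag}(\sigma_p,\sigma_q)$ constrains both diagonal entries of the $2\times 2$ block simultaneously (via $|d_1'|+|d_2'|\leq\sigma_p+\sigma_q$ and $\bigl||d_1'|-|d_2'|\bigr|\leq\sigma_p-\sigma_q$), so realizing one prescribed entry while keeping the leftover data admissible is exactly the delicate point. As it stands the induction does not close, so the proposal establishes necessity but leaves sufficiency as a conjecture about the reduction step.
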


Provided that the given singular values and diagonal elements satisfy the Sing-Thompson conditions, we also refer the reader to \cite{Chu1999}
for a numerical procedure to solve this problem.
Motivated by this, 
our purpose in this investigation is to explore mathematical properties and to formulate a numerical scheme for solving the ISVP with nonnegative entries and prescribed diagonal entries and as a by-product, with prescribed specific structure.  These implementations can both be seen in our subsequent discussion.

To set forth our consideration, we define our major concern as follows

\vskip .1in
\begin{minipage}{0.9\textwidth}
\rm{(}\textbf{NISVP}\rm{)} 
Given a collection of nonnegative real numbers $\boldsymbol{\sigma} = \{ \sigma_{1}, \cdots, \sigma_{n} \}$
and a set of real numbers $\boldsymbol{d}= \{ d_{1}, \cdots, d_{n} \}$,
construct a nonnegative $m \times n$ matrix, where $m \geq n$, with singular values $\boldsymbol{\sigma}$
and principal diagonal entries $\boldsymbol{d}$.

\end{minipage}
\vskip .1in


While considering nonnegative properties, it is true that extensive research interests have been done for the study of the spectrum of nonnegative matrices. 
Among the considerably established results, notably the well-known necessary, but not sufficient, condition is the Perron-Frobenius theorem. 
Equipped with fruitful results on nonnegative matrices, naturally, the IEP for nonnegative matrices 
has drawn the attention of researchers.
In contrast to the large amount of study on the nonnegative IEP, the 
nonnegative ISVP has received much less investigation.
To the best of our knowledge, 
there is no literature on conditions of the existence for the nonnegative ISVP with prescribed entries whereas most of the research on the ISVP is devoted to numerical computation only. 
For example, let us state an ISVP, which is originally posed in \cite{Chu1992}.
Given a collection of real general $m \times n$ matrices $A_{0}, A_{1}, \cdots, A_{n}$
where  $m \geq n$ and a sequence of nonnegative real numbers $\sigma_{1}, \cdots, \sigma_{n}$,
find values of real numbers $c := (c_{1}, \cdots, c_{n})^{T} \in \mathds{R}^{n}$ such that
the singular values of the matrix
\begin{equation}
A(c) := A_{0} + c_{1}A_{1} + \cdots + + c_{n}A_{n}
\end{equation}
are exactly $\sigma_{1}, \cdots, \sigma_{n}$.
Various numerical methods have been advanced for this prototype 
including the projected gradient method in \cite{Chu1992},
Newton-type methods in \cite{Chu1992}, \cite{Bai2008}, \cite{Bai2012}, and 
an Ulm-like method in \cite{Vong2011}.
For the ISVP that realizes prescribed diagonal, 
a recursive algorithm is proposed in \cite{Chu1999}.

In this presentation, we apply successive projections technique among certain sets, each with partially desired feature, to solve the NISVP.
Essentially, the procedures are iterative, 
where a singular value decomposition is
the main computation in each iteration.
The contribution of this paper are two-fold 
with one on the theoretical side and the other on the computational side. 
First, we have not been aware of any study 
on the necessary and sufficient conditions of the existence of the NISVP with prescribed diagonal entries in the literature.
Second, a numerical approach is proposed for reconstructing such desired matrices.
From our numerical experiments, 
the algorithms work not only for prescribed diagonal entries, 
but also for arbitrarily assigned entries.
Moreover, with little effort, the numerical procedures developed in the present study should be applicable 
to matrices with other features such as Toeplitz, Hankel, and so on.

The structure of this paper is organized as follows.
We begin in section $2$ with a discussion of how $2\times 2$ matrices can be handled precisely, provided with its singular values and principal diagonal elements.  The problem of finding a general nonnegative matrix, however, remains open.
%
%
In section $3$, the approach of successive projections to solve the NISVP with prescribed diagonal elements is given.  For this approach, a detailed convergence is also given. 
Demonstration of the capacity and efficiency of the numerical approach is presented via numerical examples in section $4$ and the the concluding remark is given in section $5$.

\section{The $2\times 2$ case}
In this section, we demonstrate how a $2 \times 2$ nonnegative matrix 
with assigned singular values and diagonal entries
can be constructed in closed form.
We note that the construction of the $2 \times 2$ case in \cite{Thompson1977}
is in the complex domain. 
The requirement of being real (nonnegativity) considered in the present study 
demands an extra condition, see (\ref{eq:extra}) below,
other than the sufficient and necessary conditions obtained in \cite{Thompson1977}.  
We first offer sufficient condition for the existence of such a matrix in the following Lemma.

\begin{Lemma}\label{lemma:21}
Let $b$, $c$, $\sigma_1\geq\sigma_2$, and $d_1\geq d_2$
be nonnegative numbers. 
Then, there exists a nonnegative matrix 
\begin{equation}\label{eq:A}
A=
\left[\begin{array}{cc}d_1 & b \\c & d_2\end{array}\right]
\in\mathbb{R}^{n\times n}
\end{equation}
having  singular values $\sigma_1\geq\sigma_2$ and principal diagonal entries $d_1\geq d_2$, renumbering if necessary, if   
\begin{subequations}\label{eq:nisvp}
\begin{eqnarray}
  \sigma_1 + \sigma_2 \geq d_1+d_2,\,\,
  \sigma_1 - \sigma_2 &\geq& d_1-d_2,\quad
     \mbox{while }
 bc - d_1d_2 \leq 0,\label{eq:22cond}\\
\sigma_1- \sigma_2 &\geq& d_1 + d_2, \quad  \mbox{while }bc - d_1d_2 > 0.\label{eq:22cond2}
\end{eqnarray}
\end{subequations} 

\end{Lemma}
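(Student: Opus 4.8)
The plan is to exploit the two invariants that completely determine the singular values of a $2\times 2$ matrix and then to read off $b$ and $c$ from them. For $A$ in (\ref{eq:A}) one has the trace relation $\sigma_1^2+\sigma_2^2=\operatorname{tr}(A^{T}A)=d_1^2+d_2^2+b^2+c^2$ together with $\sigma_1\sigma_2=|\det A|=|d_1d_2-bc|$. Hence building such an $A$ is equivalent to producing nonnegative $b,c$ with prescribed sum of squares and product, $b^2+c^2=P$ and $bc=R$, where $P:=\sigma_1^2+\sigma_2^2-d_1^2-d_2^2$, and where $R:=d_1d_2-\sigma_1\sigma_2$ when we ask for $\det A\ge 0$ (the regime $bc-d_1d_2\le 0$) while $R:=d_1d_2+\sigma_1\sigma_2$ when we ask for $\det A<0$ (the regime $bc-d_1d_2>0$). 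This is precisely the dichotomy recorded in (\ref{eq:nisvp}), so the two subcases of the lemma can be handled in parallel.

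First I would settle the elementary sub-problem: for prescribed $P$ and $R$, when does $b^2+c^2=P$, $bc=R$ admit a nonnegative solution? Writing $(b+c)^2=P+2R$ and $(b-c)^2=P-2R$, a real pair exists precisely when $P+2R\ge 0$ and $P-2R\ge 0$, and one may then take $b=\tfrac12\bigl(\sqrt{P+2R}+\sqrt{P-2R}\bigr)$ and $c=\tfrac12\bigl(\sqrt{P+2R}-\sqrt{P-2R}\bigr)$. Nonnegativity of the smaller entry $c$ is equivalent to $\sqrt{P+2R}\ge\sqrt{P-2R}$, that is, to $R\ge 0$, after which $b\ge c\ge 0$ is automatic. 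This sign requirement on the product $bc=R$ is exactly the extra demand that the real nonnegative setting imposes beyond mere reality, and hence beyond the Sing--Thompson inequalities of Theorem~\ref{thm:ST}.

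It then remains to translate $P\pm 2R\ge 0$ and $R\ge 0$ into the stated inequalities by factoring each expression as a difference of two squares. In the regime $bc-d_1d_2\le 0$ one computes $P+2R=(\sigma_1-\sigma_2)^2-(d_1-d_2)^2$ and $P-2R=(\sigma_1+\sigma_2)^2-(d_1+d_2)^2$; since $\sigma_1\ge\sigma_2$ and $d_1\ge d_2$, nonnegativity of these two quantities is equivalent to $\sigma_1-\sigma_2\ge d_1-d_2$ together with $\sigma_1+\sigma_2\ge d_1+d_2$, which is exactly (\ref{eq:22cond}). In the regime $bc-d_1d_2>0$ the same manipulation yields $P-2R=(\sigma_1-\sigma_2)^2-(d_1+d_2)^2$ and $P+2R=(\sigma_1+\sigma_2)^2-(d_1-d_2)^2$, so the binding constraint is $\sigma_1-\sigma_2\ge d_1+d_2$, i.e.\ (\ref{eq:22cond2}); the companion inequality $\sigma_1+\sigma_2\ge d_1-d_2$ and the sign condition $R\ge 0$ then hold for free, since $\sigma_1+\sigma_2\ge\sigma_1-\sigma_2\ge d_1+d_2\ge d_1-d_2$ and $R=d_1d_2+\sigma_1\sigma_2\ge 0$.

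I expect the main obstacle to lie not in the algebra but in the nonnegativity bookkeeping. Reality of $b,c$ is delivered by the two square inequalities, but forcing $b,c\ge 0$ requires the product $bc=R$ to be nonnegative, and it is exactly this that splits the argument according to the sign of $\det A$. Checking that the chosen $R$ is nonnegative is immediate when $\det A<0$ but is a genuine constraint when $\det A\ge 0$; verifying this in each regime is the delicate point that separates the present nonnegative construction from Thompson's complex one and that the case structure of (\ref{eq:nisvp}) is designed to accommodate.
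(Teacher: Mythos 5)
Your proposal is correct and takes essentially the same route as the paper: both reduce the problem to the system $b^2+c^2=\sigma_1^2+\sigma_2^2-d_1^2-d_2^2$ and $bc=d_1d_2\mp\sigma_1\sigma_2$ according to the sign of $\det A$, and both verify solvability by squaring the stated inequalities --- your difference-of-squares factorizations of $P\pm 2R$ are precisely the paper's squared conditions, with your explicit $(b\pm c)^2$ formulas playing the role of the paper's geometric circle--hyperbola intersection argument. If anything, your version is slightly more careful, since you make explicit the sign requirement $bc=R\ge 0$ in the $\det A\ge 0$ branch (i.e.\ $d_1d_2\ge\sigma_1\sigma_2$, which the stated inequalities alone do not force), a point the paper quietly absorbs by writing the hyperbola as $bc=|d_1d_2-\sigma_1\sigma_2|$.
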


\begin{proof}
To show the existence of a nonnegative 
matrix as it is given in~\eqref{eq:A} 
with singular values $\{\sigma_1, \sigma_2\}$, it suffices to establish that the statement 
\begin{equation*}
\det(\lambda I - A^\top A) = (\lambda - \sigma_1^2)(\lambda - \sigma_2^2) 
\end{equation*}
is true for some entries $b$ and $c$, that is, 
\begin{subequations}\label{eq:22sig}
\begin{eqnarray}
 b^2+c^2 &= & \sigma_1^2+\sigma_2^2-d_1^2-d_2^2,\label{eq:22siga}\\
 bc  &=&  \left\{\begin{array}{cl}
 d_1d_2 - \sigma_1\sigma_2,  & \mbox{if }
 bc - d_1d_2 \leq 0,\\
  \sigma_1\sigma_2+d_1d_2,  & \mbox{if }
 bc-d_1d_2 > 0.
  \\
  \end{array}\right.\label{eq:22sigb}
\end{eqnarray}
\end{subequations}

First, observe that if $ bc - d_1d_2 \leq 0$, then by squaring both sides of~\eqref{eq:22cond}, we have 
\begin{subequations}\label{eq:221cond}
\begin{eqnarray}
  \sigma_1^2 + \sigma_2^2-d_1^2-d_2^2 &\geq& 2(d_1d_2- \sigma_1\sigma_2),\\
   \sigma_1^2 + \sigma_2^2-d_1^2-d_2^2 &\geq& 2( \sigma_1\sigma_2 - d_1d_2),
\end{eqnarray}
\end{subequations}
that is, 
\begin{equation}\label{eq:222cond}
\sigma_1^2 + \sigma_2^2-d_1^2-d_2^2 \geq 2|(d_1d_2- \sigma_1\sigma_2)|\geq 0.
\end{equation}
This implies that the right hand side of~\eqref{eq:22siga} is nonnegative. Also, from~\eqref{eq:222cond}, the solvability of nonnegative solution $(b,c)$
is ensured, since the vertex of the hyperbola $\mathcal{H}_1$ with equation $bc=|(d_1d_2- \sigma_1\sigma_2)|$ in the positive quadrant lies within the circle $\mathcal{C}$ 
with center at the origin and radius $(\sigma_1^2 + \sigma_2^2-d_1^2-d_2^2)^{\frac{1}{2}}$.

Second, if $ bc - d_1d_2 > 0$, then a further requirement
\begin{equation*}
( \sigma_1\sigma_2 + d_1d_2 ) \leq (\sigma_1^2 + \sigma_2^2-d_1^2-d_2^2)/2
\end{equation*}
is needed so that the vertex of the hyperbola $\mathcal{H}_2$ with equation  $bc=\sigma_1\sigma_2 + d_1d_2$ in the positive quadrant lies 
within the circle $\mathcal{C}$. It follows that $(b,c)$ is solvable, if~\eqref{eq:22cond2} is satisfied. Since~\eqref{eq:22cond2} implies~\eqref{eq:22cond}, this completes the proof.  

%
%
%
%

%

\end{proof}

From the proof of Lemma~\ref{lemma:21}, it can be easily seen that 
the assumption of $d_1\geq d_2$ for the existence of a $2\times 2$ nonnegative matrix is not a necessity. However, the second inequality in~\eqref{eq:22cond}
must be rewritten as 
\begin{equation}
   \sigma_1 - \sigma_2 \geq |d_1-d_2|.
\end{equation}

We are to provide necessary condition for the existence of such a matrix in the following Lemma.
In particular, necessary conditions
in the cases of symmetric and diagonal matrix are presented, respectively.

\begin{Lemma}\label{lemma:22}
Let $A=
\left[\begin{array}{cc}d_1 & b \\c & d_2\end{array}\right]
\in\mathbb{R}^{n\times n}$
be a nonnegative matrix having singular values $\sigma_1\geq\sigma_2$ and principal diagonal entries $d_1\geq d_2$, renumbering if necessary. 
\begin{itemize}
\item[(i)] If $bc - d_1d_2 \leq 0$, then  
 \begin{subequations}
\begin{eqnarray}
  \sigma_1 + \sigma_2 &\geq& d_1+d_2,\label{ineq:nisvp1}\\
  \sigma_1 - \sigma_2 &\geq& d_1-d_2.\label{ineq:nisvp2}
\end{eqnarray}
\end{subequations}

\item[(ii)] If $bc - d_1d_2 > 0$, then  
\begin{eqnarray}
  \sigma_1 - \sigma_2 &\geq& d_1+d_2.\label{ineq:nisvp3}
\end{eqnarray}
\end{itemize}
Moreover, the equalities of~\eqref{ineq:nisvp1} and~\eqref{ineq:nisvp3}
hold precisely when $A$ is a symmetric matrix and the equality of~\eqref{ineq:nisvp2} holds precisely when $A$ is a diagonal matrix.
\end{Lemma}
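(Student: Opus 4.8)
The plan is to convert the two defining relations between the singular values and the entries of $A$ into the claimed inequalities by completing squares. For any $2\times 2$ matrix the singular values satisfy
\begin{equation*}
\sigma_1^2+\sigma_2^2 = d_1^2+d_2^2+b^2+c^2,\qquad \sigma_1\sigma_2 = |d_1d_2-bc|,
\end{equation*}
the first being the trace of $A^\top A$ and the second being $|\det A|$. From these I would immediately record
\begin{equation*}
(\sigma_1\pm\sigma_2)^2 = d_1^2+d_2^2+b^2+c^2 \pm 2|d_1d_2-bc|.
\end{equation*}

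Next I would split on the sign of $d_1d_2-bc$, matching the two cases of the Lemma. In case~(i), where $|d_1d_2-bc|=d_1d_2-bc$, the two expressions collapse to the perfect squares
\begin{equation*}
(\sigma_1+\sigma_2)^2 = (d_1+d_2)^2+(b-c)^2,\qquad (\sigma_1-\sigma_2)^2 = (d_1-d_2)^2+(b+c)^2.
\end{equation*}
Because all quantities are nonnegative and $\sigma_1\geq\sigma_2$, $d_1\geq d_2$, discarding the nonnegative term $(b-c)^2$, respectively $(b+c)^2$, and taking square roots yields~\eqref{ineq:nisvp1}, respectively~\eqref{ineq:nisvp2}. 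In case~(ii), where $|d_1d_2-bc|=bc-d_1d_2$, the same algebra gives $(\sigma_1-\sigma_2)^2=(d_1+d_2)^2+(b-c)^2$, from which~\eqref{ineq:nisvp3} follows in the identical manner.

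The equality assertions then read off the factorizations. Equality in~\eqref{ineq:nisvp1} or in~\eqref{ineq:nisvp3} forces $(b-c)^2=0$, that is $b=c$, which is exactly the symmetry of $A$; conversely $b=c$ makes the discarded square vanish. Equality in~\eqref{ineq:nisvp2} forces $(b+c)^2=0$, and here I would invoke the nonnegativity of the entries to upgrade $b+c=0$ to $b=c=0$, i.e.\ that $A$ is diagonal. I expect the only delicate points to be keeping the two sign cases straight and applying nonnegativity at precisely the step that separates the symmetric case $b=c$ from the strictly stronger diagonal case $b=c=0$; the underlying identities are routine completions of squares.
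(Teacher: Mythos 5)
Your proof is correct and is in substance the same as the paper's: the relations the paper derives for $b$ and $c$ (its equations for $b^2+c^2$ and $bc$ in terms of $\sigma_1,\sigma_2,d_1,d_2$) are exactly your trace and determinant identities, and the paper's ``real solvability'' conditions $b^2+c^2-2(d_1d_2-bc)\geq -2d_1d_2$ and $b^2+c^2-2(bc-d_1d_2)\geq 2d_1d_2$ are precisely your completed squares $(b+c)^2\geq 0$ and $(b-c)^2\geq 0$, giving \eqref{ineq:nisvp2} and \eqref{ineq:nisvp3} with the same equality analysis ($b+c=0$ upgraded by nonnegativity to the diagonal case, $b=c$ for the symmetric case). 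The one point where you genuinely depart from the paper is \eqref{ineq:nisvp1}: the paper cites Thompson's Lemma 3 and omits the argument, whereas your identity $(\sigma_1+\sigma_2)^2=(d_1+d_2)^2+(b-c)^2$ in case (i) proves it directly by the identical mechanism, which buys a uniform, self-contained treatment of all three inequalities and of both equality characterizations from one pair of identities.
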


\begin{proof}
The necessary condition in~\eqref{ineq:nisvp1} is a well-known result and has been shown in~\cite[Lemma 3]{Thompson1977} followed by the discussion of equality. Hence we omit the proof here.

Because $A$ is a nonnegative matrix  having singular values $\sigma_1\geq\sigma_2$ and principal diagonal entries $d_1\geq d_2$, the numbers $b$ and $c$ must satisfy the relationship given by~\eqref{eq:22sig}. 
Note that the condition that $b, c \in \mathbb{R}$ 
is equivalent to 
$b^2+c^2-2(d_1d_2-bc) \geq -2 d_1d_2$, and thus by~\eqref{eq:22sig} with 
$bc - d_1d_2 \leq 0$
to 
\begin{equation*}
\sigma_1^2+\sigma_2^2 - 2\sigma_1\sigma_2 \geq d_1^2+d_2^2 - 2 d_1d_2, 
\end{equation*} 
i.e., $\sigma_1 -\sigma_2  \geq d_1 - d_2 $. Truly, the equality happens when 
$b = c = 0$, which completes the proof of~\eqref{ineq:nisvp2}. 

On the other hand, if $bc - d_1d_2 > 0$, then the condition that $b, c \in \mathbb{R}$ 
is equivalent to 
$b^2+c^2-2(bc - d_1d_2) \geq 2 d_1d_2$, and thus by~\eqref{eq:22sig} with 
$bc - d_1d_2 > 0$
to 
\begin{equation*}
\sigma_1^2+\sigma_2^2 - 2\sigma_1\sigma_2 \geq d_1^2+d_2^2 + 2 d_1d_2, 
\end{equation*} 
i.e., $\sigma_1 -\sigma_2  \geq d_1 + d_2 $. Also, the equality happens when 
$b = c$, which completes the proof of~\eqref{ineq:nisvp3}.

\end{proof}

For clarity, we state, by combining Lemma \ref{lemma:21} and Lemma \ref{lemma:22},
the sufficient and necessary conditions for the existence of
$2 \times 2$ nonnegative matrices with given singular values and diagonal entries as follows.
\begin{theorem}\label{THM}
There exists a nonnegative matrix 
$
A=
\left[\begin{array}{cc}d_1 & b \\c & d_2\end{array}\right]
\in\mathbb{R}^{n\times n}
$
having  singular values $\sigma_1\geq\sigma_2$ and principal diagonal entries $d_1\geq d_2$, renumbering if necessary,  if and only if   
\begin{subequations}
\begin{eqnarray}
  \sigma_1 + \sigma_2 \geq d_1+d_2,\,\,
  \sigma_1 - \sigma_2 &\geq& d_1-d_2,\quad
     \mbox{while }
 bc - d_1d_2 \leq 0,\\
\sigma_1- \sigma_2 &\geq& d_1 + d_2, \quad  \mbox{while }bc - d_1d_2 > 0. \label{eq:extra}
\end{eqnarray}
\end{subequations} 

\end{theorem}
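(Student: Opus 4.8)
The plan is to read Theorem~\ref{THM} as the conjunction of the two preceding lemmas, matched branch by branch according to the sign of $bc - d_1 d_2$, and to assemble the ``if and only if'' from the sufficiency established in Lemma~\ref{lemma:21} and the necessity established in Lemma~\ref{lemma:22}. No new computation should be required; the work lies entirely in aligning the hypotheses of the two lemmas so that each implication is seen to follow, and in checking that the two lemmas split the cases in the same way.

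First I would dispatch the ``if'' direction. Suppose the stated inequalities hold. If we are in the regime $bc - d_1 d_2 \le 0$, then the pair $\sigma_1 + \sigma_2 \ge d_1 + d_2$ and $\sigma_1 - \sigma_2 \ge d_1 - d_2$ is precisely hypothesis~\eqref{eq:22cond} of Lemma~\ref{lemma:21}; if instead $bc - d_1 d_2 > 0$, then $\sigma_1 - \sigma_2 \ge d_1 + d_2$ is precisely~\eqref{eq:22cond2}. In either regime Lemma~\ref{lemma:21} furnishes a nonnegative pair $(b,c)$ solving~\eqref{eq:22sig}, hence a nonnegative matrix $A$ of the required form carrying the prescribed singular values and diagonal.

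Next I would dispatch the ``only if'' direction. Assume such a nonnegative $A$ exists. Its off-diagonal entries $b$ and $c$ must satisfy the singular-value relations~\eqref{eq:22sig}, so that the branch is governed by whether $bc = d_1 d_2 - \sigma_1 \sigma_2$ or $bc = d_1 d_2 + \sigma_1 \sigma_2$. In the former branch Lemma~\ref{lemma:22}(i) delivers~\eqref{ineq:nisvp1} and~\eqref{ineq:nisvp2}, while in the latter Lemma~\ref{lemma:22}(ii) delivers~\eqref{ineq:nisvp3}. These are exactly the inequalities claimed, so necessity follows.

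The one point that needs care---and what I expect to be the main obstacle, modest as it is---is the bookkeeping of the case split. Because the dichotomy is phrased through $bc - d_1 d_2$, where $b$ and $c$ are the very entries being constructed, I would verify that the two lemmas partition identically: Lemma~\ref{lemma:21} builds $(b,c)$ with $bc - d_1 d_2 = -\sigma_1\sigma_2 \le 0$ in the first branch and $bc - d_1 d_2 = \sigma_1\sigma_2 > 0$ in the second, and Lemma~\ref{lemma:22} reads off its inequalities under exactly the same branch condition. Once this consistency is confirmed, the sufficiency and necessity glue together branch by branch with nothing further to prove.
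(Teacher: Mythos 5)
Your proposal is correct and matches the paper exactly: the paper gives no separate proof of Theorem~\ref{THM}, stating it as the direct combination of the sufficiency in Lemma~\ref{lemma:21} and the necessity in Lemma~\ref{lemma:22}, aligned branch by branch on the sign of $bc - d_1d_2$ just as you describe. Your extra check that the constructed pair satisfies $bc - d_1d_2 = -\sigma_1\sigma_2 \le 0$ in the first branch and $bc - d_1d_2 = \sigma_1\sigma_2 > 0$ in the second is a sensible verification of the case bookkeeping that the paper leaves implicit.
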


In fact, in \cite{Thompson1977}, the existence of general complexed-valued $n \times n$ matrices
with prescribed singular values and diagonal elements is derived based on 
the results for the $2 \times 2$ case.
However,
the problem of finding sufficient and necessary conditions for the existence of
nonnegative matrices with general sizes remains open.

\section{Projection formulations}\label{sec_proj}
In this section, we wish to apply the concept of ``successive projection" onto two closed sets, for solving the NISVP. The similar strategy has been applied as a very useful paradigm for solving inverse eigenvalue problems with nonnegative entries; see, for example,~\cite{Orsi2006}.

\subsection{General matrices} 
By a general matrix, we mean an $m\times n$ matrix with nonnegative entries and $m\geq n$. To solve general NISVPs with prescribed diagonal elements, let $ \boldsymbol{\sigma} = \{ \sigma_1,\ldots,\sigma_n\}$
and $ \boldsymbol{d} = \{d_1,\ldots,d_n\}$ 
denote the designated sets of singular values and diagonal elements, respectively. Renumbering if necessary, assume both sets are in descending order, i.e., $\sigma_1\geq\ldots\geq\sigma_n$ and $d_1\geq\ldots\geq d_n$.
Let $\mathcal{M}$ be a set of $m\times n$ matrices denoted by 
\begin{equation}
\mathcal{M} = \{A \in\mathbb{R}^{m\times n} | A = U\Sigma V^\top\, \mbox{with  }U\in\mathcal{O}(m), V\in\mathcal{O}(n)\}
\end{equation}
with $\Sigma : = \diag\{\sigma_1,\ldots,\sigma_n\}\in\mathbb{R}^{m\times n}$ and $\mathcal{O}(m)$ the group of all real orthogonal $m\times m $ matrices,
and let $\mathcal{D}_g\\$ be a set of $m\times n$ matrices defined by 
\begin{equation}
\mathcal{D} = \{A \in\mathbb{R}^{m\times n} | A_{ii} = d_i \geq 0\, \mbox{for}\, 1\leq i\leq n\},
\end{equation}
where $A_{ij}$ is the $(i,j)$th element of the matrix $A$.
Also, let $\mathcal{P}$
denote the set of nonnegative $m\times n$ matrices,
\begin{equation}
\mathcal{P} = \{A\in\mathbb{R}^{m\times n} | A_{ij} \geq 0\, \mbox{for all}\, i,j\}.
\end{equation}

It follows that solving the NISVP with prescribed diagonal entries, 
is equivalent to finding an $m\times n$ matrix $X$
which belongs to the intersection of 
$\mathcal{M}$, $\mathcal{D}$ and $\mathcal{P}$, that is, the following problem is considered:
\begin{equation}
\mbox{Find}\, X\in \mathcal{M}\cap\mathcal{D}\cap\mathcal{P}.
\end{equation}

Our criterion is based on alternative projections by 
finding a best projection onto each of the sets $\mathcal{M}$, $\mathcal{D}$ and $\mathcal{P}$. Let's now consider projections onto the different sets given above.  
To begin with, we need to state the following important property, 
the Neumann inequality, which was shown in 1937~\cite{Neumann1937}. Its original proof was particularly complicated. The reader is refereed to~\cite{Mirsky1975} for an alternative but fundamental treatment of the proof. 
For the case of equality condition, its initial proof is given by de S{\'a}~\cite{De1994}. See also~\cite{Rhea2011} for a shorten form of the proof of equality
via the well-known theorem of Eckhart and Young~\cite{Eckart1939}. 

\begin{theorem}\label{ThmNenuman}
Given two matrices $A$ and $B$ in $\mathbb{R}^{m\times n}$, let 
$\rho_1\geq \ldots\geq \rho_n$ and $\sigma_1\geq\ldots\geq \sigma_n$ be singular values of $A$ and $B$, respectively. Then
\begin{equation}
\rm{trace}(A^\top B)\leq \sum_{i=1}^n \rho_i\sigma_i,
\end{equation}
where 
$\rm{trace}(A)$ denotes the sums of diagonal entries of the matrix $A$.
In particular, the case of equality occurs if and only if there exist orthogonal matrices $U\in\mathcal{R}^{m\times m}$ and $V\in\mathcal{R}^{n\times n}$ such that 
\begin{equation}
A = U\Sigma_A V^\top\quad\mbox{and}\quad B = U\Sigma_B V^\top,
\end{equation}
where $\Sigma_A = \rm{diag}(\rho_1,\ldots,\rho_n) \in\mathbb{R}^{m\times n}$  and $\Sigma_B = \rm{diag}(\sigma_1,\ldots,\sigma_n) \in\mathbb{R}^{m\times n}$~\cite{De1994,Rhea2011}. 
\end{theorem}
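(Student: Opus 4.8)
The plan is to reduce the inequality to a statement about orthogonal matrices acting on the diagonal singular-value matrices, and then to invoke a doubly substochastic / rearrangement argument. First I would take singular value decompositions $A = U_A \Sigma_A V_A^\top$ and $B = U_B \Sigma_B V_B^\top$ with $U_A, U_B \in \mathcal{O}(m)$, $V_A, V_B \in \mathcal{O}(n)$, and $\Sigma_A = \mathrm{diag}(\rho_1,\ldots,\rho_n)$, $\Sigma_B = \mathrm{diag}(\sigma_1,\ldots,\sigma_n)$ in $\mathbb{R}^{m\times n}$. Using the cyclic property of the trace,
\[
\mathrm{trace}(A^\top B) = \mathrm{trace}(\Sigma_A^\top P \Sigma_B Q), \qquad P := U_A^\top U_B \in \mathcal{O}(m),\ \ Q := V_B^\top V_A \in \mathcal{O}(n).
\]
Expanding entrywise and using that $\Sigma_A^\top$ and $\Sigma_B$ carry only the diagonal weights $\rho_i$ and $\sigma_k$ for $1 \le i,k \le n$, this collapses to
\[
\mathrm{trace}(A^\top B) = \sum_{i=1}^n \sum_{k=1}^n \rho_i \sigma_k\, P_{ik} Q_{ki}.
\]
Thus the whole problem becomes that of maximizing the bilinear form $\sum_{i,k} \rho_i \sigma_k z_{ik}$, where $z_{ik} := P_{ik} Q_{ki}$, over the admissible coefficient matrices $Z = (z_{ik})$.

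Next I would show that $Z$ is doubly substochastic. Since $P$ is orthogonal, its $i$th row satisfies $\sum_{k=1}^n P_{ik}^2 \le 1$, and since $Q$ is orthogonal, its $i$th column satisfies $\sum_{k=1}^n Q_{ki}^2 = 1$; the Cauchy--Schwarz inequality then gives $\sum_{k=1}^n |z_{ik}| \le (\sum_k P_{ik}^2)^{1/2}(\sum_k Q_{ki}^2)^{1/2} \le 1$, and symmetrically $\sum_{i=1}^n |z_{ik}| \le 1$. Hence $Z$ lies in the compact convex set of doubly substochastic matrices, and the linear functional $Z \mapsto \sum_{i,k}\rho_i \sigma_k z_{ik}$ attains its maximum at an extreme point. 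By the Birkhoff-type characterization, the extreme points are the sub-permutation matrices, and because all $\rho_i,\sigma_k \ge 0$ the maximizer may be taken to be a genuine permutation $z_{ik} = \delta_{k,\pi(i)}$. Finally, since $\{\rho_i\}$ and $\{\sigma_k\}$ are both arranged in decreasing order, the rearrangement inequality yields $\sum_i \rho_i \sigma_{\pi(i)} \le \sum_i \rho_i \sigma_i$ for every permutation $\pi$, which establishes the claimed bound.

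The harder part will be the equality characterization. Tracing the argument backwards, equality forces the bilinear form to be maximized with the Cauchy--Schwarz steps tight, so $Z$ must reduce to the identity permutation within each block of coincident singular values; this in turn forces $P$ and $Q$ to be block diagonal, conforming to the multiplicity pattern of the singular values, so that the corresponding left and right singular subspaces of $A$ and $B$ coincide. Assembling these alignments produces a single pair of orthogonal matrices $U,V$ with $A = U\Sigma_A V^\top$ and $B = U\Sigma_B V^\top$; the converse implication is immediate by direct substitution. The main technical obstacle is handling repeated singular values cleanly --- where the maximizing permutation is no longer unique and only a block-diagonal, rather than strictly diagonal, structure can be extracted --- which is precisely the subtlety treated in the referenced proofs of de S\'a and of Rhea, where the Eckart--Young theorem is used to streamline this bookkeeping.
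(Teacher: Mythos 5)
Your proposal addresses a statement that the paper itself never proves: Theorem~\ref{ThmNenuman} is quoted as von Neumann's trace inequality, with the inequality attributed to~\cite{Neumann1937,Mirsky1975} and the equality analysis to~\cite{De1994,Rhea2011}, and no argument is reproduced. What you have written is, in substance, Mirsky's elementary proof of the inequality, i.e.\ the very route the paper points to but omits. Your reduction is correct: with $P=U_A^\top U_B\in\mathcal{O}(m)$ and $Q=V_B^\top V_A\in\mathcal{O}(n)$ one indeed gets
\begin{equation*}
\mathrm{trace}(A^\top B)=\mathrm{trace}(\Sigma_A^\top P\,\Sigma_B Q)=\sum_{i=1}^n\sum_{k=1}^n \rho_i\sigma_k P_{ik}Q_{ki},
\end{equation*}
and your Cauchy--Schwarz bounds are right (the sums $\sum_{k=1}^n P_{ik}^2$ and $\sum_{i=1}^n P_{ik}^2$ are \emph{partial} row/column sums of the $m\times m$ orthogonal $P$, hence $\leq 1$, while the corresponding sums for the $n\times n$ orthogonal $Q$ equal $1$). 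One wording slip: $Z=(P_{ik}Q_{ki})$ may have negative entries, so it is $|Z|$ that is doubly substochastic; you should first pass to $\sum_{i,k}\rho_i\sigma_k z_{ik}\leq\sum_{i,k}\rho_i\sigma_k|z_{ik}|$ (valid since $\rho_i\sigma_k\geq 0$) before invoking the extreme-point (subpermutation) characterization, or equivalently work in the polytope of matrices with absolute row and column sums at most $1$, whose extreme points are signed subpermutation matrices. With that repair, the inequality proof is complete and correct, and it is arguably a useful addition since the paper gives none.

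The genuine weakness is the equality characterization, which in your write-up remains a sketch rather than a proof. ``Tracing the argument backwards'' does force the diagonal of $|Z|$ to carry full weight only on indices with $\rho_i\sigma_i>0$; indices where $\rho_i\sigma_i=0$ impose \emph{no} constraint, so whenever zero singular values occur (and the left factor always has slack when $m>n$) you must invoke the non-uniqueness of singular vectors attached to zero singular values to assemble a common pair $(U,V)$ --- your block-diagonal argument as stated does not produce it. Likewise, within a block of coincident singular values you need to show that equality forces $Q=P^\top$ blockwise (not merely that $P$ and $Q$ are block diagonal) before the two decompositions can be merged. You candidly defer exactly this bookkeeping to de S\'a and Rhea, which mirrors what the paper does for the entire theorem; but as a standalone proof, the ``only if'' direction of the equality statement is not established by your text, while the inequality and the ``if'' direction are.
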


Based on the Neumann result, we then 
have the projection formulation for a general matrix as follows.
\begin{theorem} \label{thm:best}
Let $A$ be a nonnegative $m\times n$ matrix. Suppose that 
\begin{equation}
A = U \Sigma_AV^\top, 
\end{equation}
where 
$U\in\mathbb{R}^{m\times m}$ and $V\in\mathbb{R}^{n\times n}$ are two unitary matrices, and $\Sigma_A = \rm{diag}(\rho_1,\ldots,\rho_n)$ with diagonal elements $\rho_1\ldots\rho_n$, be the singular value decomposition of $A$.
Then $U \Sigma V^{\top}$ is a best approximation in $\mathcal{M}$ to $A$
in the Frobenius norm.
\end{theorem}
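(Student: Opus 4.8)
The plan is to reduce the constrained minimization to maximizing a trace functional and then invoke the Neumann inequality of Theorem~\ref{ThmNenuman}. Since every $B\in\mathcal{M}$ has the prescribed singular values $\sigma_1,\ldots,\sigma_n$, its squared Frobenius norm $\|B\|_F^2=\sum_{i=1}^n\sigma_i^2$ is constant over $\mathcal{M}$; likewise $\|A\|_F^2=\sum_{i=1}^n\rho_i^2$ is fixed. Expanding
\[
\|B-A\|_F^2=\|B\|_F^2-2\,\mathrm{trace}(B^\top A)+\|A\|_F^2,
\]
I would observe that minimizing $\|B-A\|_F$ over $\mathcal{M}$ is equivalent to maximizing $\mathrm{trace}(B^\top A)$ over $\mathcal{M}$, because the two squared-norm terms do not depend on the choice of $B\in\mathcal{M}$.

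First, I would apply Theorem~\ref{ThmNenuman} to the pair $A$ and $B$: since the singular values of $A$ are $\rho_1\geq\cdots\geq\rho_n$ and those of $B$ are $\sigma_1\geq\cdots\geq\sigma_n$, this gives the uniform upper bound
\[
\mathrm{trace}(B^\top A)\leq\sum_{i=1}^n\sigma_i\rho_i,\qquad\text{for every }B\in\mathcal{M}.
\]
Next, I would exhibit a matrix in $\mathcal{M}$ that attains this bound, namely the candidate $B^{\star}:=U\Sigma V^\top$ built from the left and right singular vectors of $A$. A direct computation gives $B^{\star\top}A=V\Sigma^\top U^\top U\Sigma_A V^\top=V\,(\Sigma^\top\Sigma_A)\,V^\top$, and since $\Sigma^\top\Sigma_A=\mathrm{diag}(\sigma_1\rho_1,\ldots,\sigma_n\rho_n)$, the cyclic invariance of the trace together with $V^\top V=I$ yields $\mathrm{trace}(B^{\star\top}A)=\sum_{i=1}^n\sigma_i\rho_i$. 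Hence $B^{\star}$ saturates the Neumann bound and is therefore a maximizer of the trace functional, equivalently a minimizer of $\|B-A\|_F$ over $\mathcal{M}$, which is exactly the assertion. Alternatively, the equality clause of Theorem~\ref{ThmNenuman} characterizes the maximizers as precisely those $B$ sharing the singular vectors of $A$, which again identifies $B^{\star}$ as optimal.

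The argument is essentially routine once the reduction to a trace maximization is in place, so there is no serious obstacle to overcome. The only point demanding genuine care is the bookkeeping of the rectangular dimensions: here $\Sigma,\Sigma_A\in\mathbb{R}^{m\times n}$ while $\Sigma^\top\Sigma_A\in\mathbb{R}^{n\times n}$, so I would verify that the zero padding in the rectangular factors leaves the identity $\mathrm{trace}(\Sigma^\top\Sigma_A)=\sum_{i=1}^n\sigma_i\rho_i$ intact. I would also remark that $\mathcal{M}$ is nonempty and that the optimum is genuinely attained rather than merely approached, so that the language of a \emph{best} approximation is justified.
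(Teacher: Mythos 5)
Your proposal is correct and follows essentially the same route as the paper: expand $\|X-A\|_F^2$, discard the two constant trace terms, reduce to maximizing $\mathrm{trace}(XA^\top)$ over $\mathcal{M}$, and invoke the Neumann inequality of Theorem~\ref{ThmNenuman}. In fact you are slightly more careful than the paper, which merely asserts that $U\Sigma V^\top$ follows from the Neumann result (its final display even contains a typo, writing $\max_{X\in\mathcal{M}}\|X-A\|^2$ where $\max_{X\in\mathcal{M}}\mathrm{trace}(XA^\top)$ is meant), whereas you explicitly verify attainment by computing $\mathrm{trace}\bigl(V\Sigma^\top\Sigma_A V^\top\bigr)=\sum_{i=1}^n\sigma_i\rho_i$.
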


\begin{proof}
Note that for all $X\in\mathcal{M}$, 
\begin{equation}\label{XA}
\| X-A\|^{2} = \rm{trace}(\Sigma\Sigma^{\top}) - 2\rm{trace}(XA^{\top}) +\rm{trace}(\tilde{\Sigma}\tilde{\Sigma}^{\top}).
\end{equation}
Thus, finding $X\in\mathcal{M}$ that minimizes $\| X-A\|^{2}$ is equivalent to finding $X\in\mathcal{M}$ that maximizes $\rm{trace}(XA^{\top})$, since the values of $\rm{trace}(\Sigma\Sigma^{\top})$ and $\rm{trace}(\tilde{\Sigma}\tilde{\Sigma}^{\top})$ are constant. 
It then follows from Theorem~\ref{ThmNenuman} that 
if $Y = U \Sigma V^{\top}$, then 
\begin{equation}
\rm{trace}(YA^\top) = \max\limits_{X\in\mathcal{M}} \|X-A\|^2.
\end{equation}
This completes the proof.
\end{proof}

Applying Theorem~\ref{thm:best}, we can now provide a way for the 
projection of the matrix $A$ onto the space $\mathcal{M}$. However, as regards the space $\mathcal{M}$, it is easy to see that $\mathcal{M}$ is nonconvex. Thus, the projection of the matrix $A$ onto the space $\mathcal{M}$ is not unique. 
Thus, to understand the necessary condition of a best projection
would be of interest for further reference. 
For this purpose, 
we shall shortly show the following well-known fact below and embark on relating this result to the proof of the necessary condition of a given matrix $X\in\mathcal{M}$ that minimizes 
$\|X-A\|^2$.

\begin{Lemma}\label{lemmaSkew}
Let $Q$ be an $n\times n$ matrix. Then, $\rm{trace}(Q\Lambda)= 0$
for all skew-symmetric matrices $\Lambda$ if and only if $Q$ is symmetric.
\end{Lemma}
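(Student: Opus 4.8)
The plan is to exploit the orthogonal decomposition of $n\times n$ matrices into their symmetric and skew-symmetric parts under the Frobenius (trace) inner product $\langle A,B\rangle = \text{trace}(A^\top B)$. First I would write $Q = S + K$, where $S = \tfrac{1}{2}(Q+Q^\top)$ is symmetric and $K = \tfrac{1}{2}(Q-Q^\top)$ is skew-symmetric, so that for every skew-symmetric $\Lambda$ we have $\text{trace}(Q\Lambda) = \text{trace}(S\Lambda) + \text{trace}(K\Lambda)$.

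The central observation is that the product of a symmetric and a skew-symmetric matrix always has vanishing trace. I would verify this by the transpose trick: $\text{trace}(S\Lambda) = \text{trace}\big((S\Lambda)^\top\big) = \text{trace}(\Lambda^\top S^\top) = -\text{trace}(\Lambda S) = -\text{trace}(S\Lambda)$, using cyclicity of the trace together with $S^\top = S$ and $\Lambda^\top = -\Lambda$; hence $\text{trace}(S\Lambda)=0$. This immediately settles the \emph{if} direction: when $Q$ is symmetric we have $K=0$, so $\text{trace}(Q\Lambda)=\text{trace}(S\Lambda)=0$ for every skew-symmetric $\Lambda$.

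For the \emph{only if} direction, the identity above reduces the hypothesis to $\text{trace}(K\Lambda)=0$ for all skew-symmetric $\Lambda$. The decisive step is to test against the admissible choice $\Lambda = K$ itself, which is legitimate since $K$ is skew-symmetric. This yields $\text{trace}(K^2)=0$; and because $\text{trace}(K^2) = -\text{trace}(K^\top K) = -\|K\|_F^2$ for skew-symmetric $K$, we conclude $\|K\|_F = 0$, i.e. $K=0$ and $Q = S$ is symmetric.

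I do not anticipate a genuine obstacle; the only points requiring care are the sign bookkeeping in the transpose manipulations and the identity $\text{trace}(K^2) = -\|K\|_F^2$. An equivalent route, which avoids the self-test, is to feed in the elementary skew-symmetric matrices $E_{ij}-E_{ji}$ for $i<j$, where $E_{ij}$ denotes the matrix with a $1$ in entry $(i,j)$ and zeros elsewhere; a direct computation gives $\text{trace}\big(Q(E_{ij}-E_{ji})\big) = Q_{ji}-Q_{ij}$, so the hypothesis forces $Q_{ij}=Q_{ji}$ for every pair $i<j$, and symmetry follows entrywise.
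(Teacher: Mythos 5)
Your proof is correct, but your primary argument takes a genuinely different route from the paper's. The paper proves the \emph{only if} direction by contradiction with elementary skew-symmetric test matrices: assuming $Q_{pq}\neq Q_{qp}$, it chooses $\Lambda$ supported on the $(p,q)$ and $(q,p)$ entries and computes $\mathrm{trace}(Q\Lambda)=Q_{pq}-Q_{qp}\neq 0$ --- exactly the alternative you sketch in your closing paragraph (your direct, non-contradiction phrasing of it is if anything cleaner). Your main argument instead decomposes $Q=S+K$ into symmetric and skew-symmetric parts and self-tests with $\Lambda=K$, obtaining $\mathrm{trace}(K^2)=-\|K\|_F^2=0$, hence $K=0$. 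This is a coordinate-free argument that makes the underlying geometry explicit: symmetric and skew-symmetric matrices are orthogonal complements under the Frobenius inner product, so the hypothesis says $Q$ is orthogonal to the skew-symmetric subspace. What the paper's entrywise approach buys is elementarity --- no norm, no decomposition, and it works verbatim over any field --- while your approach buys conceptual transparency and reuses a single computation (the symmetric-times-skew trace vanishes) for both directions; note that your \emph{if} direction via the transpose trick coincides with the paper's. One small caveat: the identity $\mathrm{trace}(K^2)=-\|K\|_F^2$ relies on working over $\mathbb{R}$, which is the setting here, whereas your fallback via $E_{ij}-E_{ji}$ does not even need that.
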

\begin{proof}
Suppose $\rm{trace}(Q\Lambda)= 0$ for all skew-symmetric matrices $\Lambda$ but $Q$ is not symmetric, that is,  assume $Q_{pq}\neq Q_{qp}$ for some entries $(p,q)$
and $(q,p)$ of $Q$. Then choose a matrix $\Lambda$ with $\Lambda_{ij} =  -1$ and $\Lambda_{ji} =1$, if $(i,j) = (p,q)$,  and $\Lambda_{ij} = 0$, otherwise. It follows that 
\begin{equation*}
\rm{trace}(Q\Lambda) = Q_{ij} - Q_{ji} \neq 0,
\end{equation*}
a contradiction.

Conversely, if $Q$ is symmetric, then 
\begin{equation*}
\rm{trace}(Q\Lambda) =\rm{trace}((Q\Lambda)^\top) = 
-\rm{trace}(\Lambda Q)
= -\rm{trace}(Q\Lambda).
\end{equation*}
Thus, $\rm{trace}(Q\Lambda) = 0$, which completes the proof.
\end{proof}

Lemma~\ref{lemmaSkew} then brings out an important point about the discussion of a necessary condition of a best projection onto the set $\mathcal{M}$.  From~\eqref{XA}, we can see that
finding $X\in\mathcal{M}$ that minimizes $\| X-A\|^{2}$ is equivalent to finding $X\in\mathcal{M}$ that maximizes $\rm{trace}(XA^{\top})$. Consider the function
\begin{equation}\label{eq:xa}
f:\mathcal{M} \rightarrow \mathbb{R}, \quad X\rightarrow \rm{trace}(XA^\top).
\end{equation}
and let $T_{X} \mathcal{M}$ denote the tangent space of $\mathcal{M}$ at $X$ given by 
\begin{eqnarray*}
T_{X} \mathcal{M} = \{ XK - HX |  K \in \mathbb{R}^{n \times n} \mbox{ and } H \in \mathbb{R}^{m \times m}
\mbox{ are skew-symmetric}. \}.
\end{eqnarray*}
See, for example,~\cite{Chu1992} for more details.
Then, the derivative of $f$ at a point $X$ in the tangent direction
$XK - HX$ is 
\begin{eqnarray}\label{eq:dxa}
Df(X) (XK - HX) = \rm{trace}((XK - HX)A^{\top}).
\end{eqnarray}
If $X\in \mathcal{M}$ minimizes~\eqref{eq:xa}, then the right hand side 
of~\eqref{eq:dxa} must be equal to zero for all skew-symmetric matrices 
$K\in \mathbb{R}^{n \times n}$ and $H\in \mathbb{R}^{m \times m}$. Upon using Lemma~\ref{lemmaSkew}, we obtain that $A^\top X$ and $XA^\top$ are symmetric. That is, given a matrix $A\in\mathbb{R}^{m\times n}$, a necessary condition of a given matrix $X\in\mathcal{M}$ that minimizes $\|X-A\|^2$ is as follows. 
\begin{remark}\label{RmkNES}
Let $A\in\mathbb{R}^{m\times n}$. If $X\in\mathcal{M}$ minimizes $\|X-A\|^2$, then matrices $A^\top X$ and $XA^\top$ must be symmetric.
\end{remark}

In contrast to $\mathcal{M}$, sets $\mathcal{P}$ and $\mathcal{D}$ are convex. Therefore, projections onto $\mathcal{P}$ and $\mathcal{D}$ are 
unique. The proofs are straightforward and simply based on the direct subtraction between elements of different sets.  

\begin{theorem}\label{ThmPositive}
Given $A\in\mathbb{R}^{m\times n} $, let $A_+$
be a matrix defined by 
\begin{equation*}
(A_+)_{ij} = \left\{\begin{array}{rc} A_{ij}, & \mbox{if}\, A_{ij}\geq 0, \\
0,
 & \mbox{if}\, A_{ij}< 0.\end{array}\right.
\end{equation*}
Then $A_+$ is the best approximation in $\mathcal{P}$ to $A$ in the Frobenius norm.
\end{theorem}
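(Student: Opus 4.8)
The plan is to exploit the fact that both the objective and the feasible set $\mathcal{P}$ split completely across the individual matrix entries, reducing the problem to $mn$ independent scalar minimizations. First I would write the squared Frobenius distance as
\[
\|X-A\|^2 = \sum_{i=1}^{m}\sum_{j=1}^{n} (X_{ij}-A_{ij})^2,
\]
and observe that the membership condition $X\in\mathcal{P}$ is equivalent to requiring $X_{ij}\geq 0$ separately for each pair $(i,j)$, with no coupling between distinct entries. Because neither the sum of squares nor the constraint links one entry to another, minimizing $\|X-A\|^2$ over $\mathcal{P}$ is the same as minimizing each summand $(X_{ij}-A_{ij})^2$ individually over the half-line $X_{ij}\geq 0$.

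The second step is to solve the resulting one-dimensional problem $\min_{x\geq 0}(x-a)^2$ for a fixed real number $a$. If $a\geq 0$, the unconstrained minimizer $x=a$ is already feasible and hence optimal; if $a<0$, then for every $x\geq 0$ we have $x-a>0$, so $(x-a)^2$ is strictly increasing on $[0,\infty)$ and attains its minimum at the endpoint $x=0$. In either case the minimizer is $\max\{a,0\}$, which is exactly the rule defining $(A_+)_{ij}$ with $a=A_{ij}$. Assembling these entrywise optima shows that $A_+$ minimizes $\|X-A\|^2$ over $\mathcal{P}$, proving it is a best approximation.

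There is essentially no serious obstacle here: the argument is elementary once the separability is noticed, which is precisely the point the paper flags when it says the proofs rest on ``the direct subtraction between elements of different sets.'' If one wishes to also record uniqueness (consistent with the earlier remark that projections onto the convex set $\mathcal{P}$ are unique), it suffices to note that each scalar function $x\mapsto(x-a)^2$ is strictly convex, so every coordinate problem has a unique solution and therefore so does the full problem. The only point requiring the slightest care is to justify that the global minimum over $\mathcal{P}$ really equals the sum of the coordinatewise minima, i.e. that no interaction between entries can do better --- but this is immediate from the fact that the feasible region is the Cartesian product of the half-lines $\{X_{ij}\geq 0\}$.
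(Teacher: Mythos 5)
Your proof is correct and takes essentially the same route as the paper's: both decompose $\|X-A\|^2$ into the entrywise sum $\sum_{i,j}(X_{ij}-A_{ij})^2$ and minimize each coordinate independently over the half-line $X_{ij}\geq 0$, yielding $\max\{A_{ij},0\}$. Your write-up is merely more explicit than the paper's, spelling out the scalar subproblem, the separability of the feasible set, and the uniqueness via strict convexity, all of which the paper leaves implicit.
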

\begin{proof}
Since for each $X\in\mathcal{P}$, we have 
\begin{equation}\label{PXA}
\| X-A\|^{2} = \sum\limits_{i,j}|X_{ij}-A_{ij}|^2, \quad 1\leq i \leq m,\, 1\leq j\leq n.
\end{equation}
To minimize the difference $\| X-A\|^{2}$ is equivalent to minimize  
each entry $|X_{ij}-A_{ij}|$, that is, take 
\begin{equation*}
X_{ij} = \left\{\begin{array}{rc} A_{ij}, & \mbox{if}\, A_{ij}\geq 0, \\
0,
 & \mbox{if}\, A_{ij}< 0,\end{array}\right.
\end{equation*} 
 as desired.
\end{proof}

\begin{theorem}\label{ThmDiagonal}
Given $A\in\mathbb{R}^{m\times n}$, let $A_d$
be a matrix defined by 
\begin{equation*}
(A_d)_{ij} = \left\{\begin{array}{rc} A_{ij}, & \mbox{if}\, i\neq j, \\
d_i,
 & \mbox{if}\, i=j.\end{array}\right.
\end{equation*}
Then $A_d$ is the best approximation in $\mathcal{D}$ to $A$ in the Frobenius norm.
\end{theorem}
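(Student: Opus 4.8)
The plan is to exploit the fact that the squared Frobenius norm decomposes as a sum over the individual matrix entries, exactly as in the proof of Theorem~\ref{ThmPositive}. For any $X\in\mathcal{D}$ I would split the double sum into its diagonal and off-diagonal contributions,
\begin{equation*}
\|X-A\|^2 = \sum_{i=1}^{n}|X_{ii}-A_{ii}|^2 + \sum_{i\neq j}|X_{ij}-A_{ij}|^2,
\end{equation*}
where $i$ ranges over $1\leq i\leq m$ and $j$ over $1\leq j\leq n$ as appropriate.

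Next I would observe that the membership constraint $X\in\mathcal{D}$ forces $X_{ii}=d_i$ for $1\leq i\leq n$, so the diagonal contribution equals $\sum_{i=1}^{n}|d_i-A_{ii}|^2$, a quantity completely determined by the data $A$ and $\boldsymbol{d}$ that cannot be influenced by any admissible choice of $X$. Hence minimizing $\|X-A\|^2$ over $\mathcal{D}$ reduces to minimizing the off-diagonal part $\sum_{i\neq j}|X_{ij}-A_{ij}|^2$, in which the entries $X_{ij}$ with $i\neq j$ range freely over $\mathbb{R}$.

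Since these off-diagonal terms are mutually independent and each is minimized at value zero by the choice $X_{ij}=A_{ij}$, the optimal matrix is obtained by retaining every off-diagonal entry of $A$ and overwriting the diagonal with the prescribed values $d_i$. This is precisely the matrix $A_d$, so $A_d$ attains the minimum and is the best approximation in $\mathcal{D}$. I expect no genuine obstacle here: because $\mathcal{D}$ is convex (indeed an affine subspace), the minimizer is unique, and the only point requiring care is recognizing that the diagonal discrepancy is a fixed constant, so that the optimization genuinely separates into the free off-diagonal coordinates.
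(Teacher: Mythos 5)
Your proposal is correct and follows essentially the same route as the paper's own proof: both decompose $\|X-A\|^2$ into the off-diagonal sum plus the fixed diagonal discrepancy $\sum_{i}|d_i-A_{ii}|^2$, and conclude that setting $X_{ij}=A_{ij}$ off the diagonal attains the minimum. Your added remarks on entrywise separability and uniqueness via convexity merely make explicit what the paper notes just before the theorem, so there is nothing to correct.
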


\begin{proof}
Finding $X\in\mathcal{D}$ to best approximate $A$ is equivalent to minimizing
\begin{equation}
 \|X-A\|^2 = \sum_{i\neq j} |X_{ij}-A_{ij}|^2+ \sum_{i} |d_{i}-A_{ii}|^2.
 \end{equation}
Since the distance between any matrix in $\mathcal{D}$ and $A$ is at least $(\sum_{i} |d_{i}-A_{ii}|^2)^{\frac{1}{2}}$, the above equality shows that $A_d$ 
is the closest element in $\mathcal{D}$ to $A$.
\end{proof}

From Theorems~\ref{thm:best}, \ref{ThmDiagonal} and \ref{ThmPositive}, we are now in a position to summarize the algorithm
of NISVP with prescribed diagonal entries in Algorithm~\ref{alg_38}.
\setlength{\algomargin}{1.5 ex}
\restylealgo{algoruled}
\SetKwComment{Comment}{}{}
\begin{algorithm}[t!!!!!] \linesnumbered
\caption{Nonnegative Inverse singular value problem with prescribed diagonal entries \hfill $[A_{new}] = \textsc{NISVP}( \boldsymbol{\sigma},  \boldsymbol{d})$}
\KwIn{
$ \boldsymbol{\sigma} = (\sigma_1,\ldots,\sigma_n)$, where $\sigma_1\geq\ldots\geq\sigma_n$,  and 
$ \boldsymbol{d} = (d_1,\ldots,d_n)$
}
\KwOut{An $m\times n$ nonnegative matrix $A$ with desired singular values $ \boldsymbol{\sigma}$
and prescribed diagonal entries $ \boldsymbol{d}$.
}
\Begin{
Randomly generate an initial $m\times n$ nonnegative matrix $A_{new}$\BlankLine
\Repeat
{
$\frac{\|A_{old} - A_{{new}}\|}{\|A_{old}\|} < \epsilon $
}{
\BlankLine
    Calculate the SVD of $A_{new}$ such that 
    \[ A_{new}  =  U\rm{diag}(\delta_1,\ldots,\delta_n)V^\top\]
for $U\in\mathcal{O}(m)$, $V\in\mathcal{O}(n)$ and  
       $\delta_1\geq\ldots\geq\delta_n$\;
$A_{new} \leftarrow U\rm{diag}(\sigma_1,\ldots,\sigma_n)V^\top $\;
$A_{old} \leftarrow A_{new}$\;
$A_{new}(find(A_{new}<0)) = 0$\;
$(A_{new})_{ii} \leftarrow d_i $ for all $i$\; \label{seki}

}\label{alg_38}}
\end{algorithm}
\subsection{Convergence analysis}
Throughout this work we examine a successive projection method for solving the NISVP. Since our method use iterative techniques involving sequences, 
this section concludes with a discussion that describes the condition at which convergence occurs. In general, we would like the technique to converge as the solution of NISVP exists and the initial value is chosen close enough to the solution.  

To elucidate our discussion, we apply $\hat{x} = P_C(x)$ to indicate that $\hat{x}$ is a projection of $x$ onto the set $C$.  If $C_1$ and $C_2$ are two closed sets, then for alternating projections between two sets, the distance between two successive projections satisfies the following condition~\cite{Orsi2006}. 

\begin{theorem}\label{thm:Oris}
Given two nonempty closed sets $C_1$ and $C_2$ in a finite dimensional Hilbert space $H$, let $x_0\in C_1$ and let
\begin{equation*}
\hat{x}_i= P_{C_2}(x_i), \quad x_{i+1} = P_{C_1}(\hat{x}_i)
\end{equation*}
be two successive projections between two closed sets, 
then 
\begin{equation*}
\|\hat{x}_{i+1}-x_{i+1}\|\leq \|x_{i+1}-\hat{x}_i\| \leq \|\hat{x}_i-x_i\|,\quad i=1,\ldots.
\end{equation*}
\end{theorem}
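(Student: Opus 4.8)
The plan is to derive both inequalities directly from the single defining property of a metric projection: for any point $z\in H$ and any nonempty closed set $C$, the point $P_C(z)$ realizes the distance from $z$ to $C$, so that $\|P_C(z)-z\|\le\|y-z\|$ for every $y\in C$. In a finite-dimensional Hilbert space the existence of such a nearest point follows from closedness alone---one intersects $C$ with a large closed ball centered at $z$ to obtain a compact set on which the continuous map $y\mapsto\|y-z\|$ attains its minimum. I emphasize that I do not need uniqueness of $P_C$, which in fact fails in our setting because the set $\mathcal{M}$ is nonconvex, so the whole argument should be phrased for an arbitrary nearest-point selection rather than for a single-valued operator.

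First I would record the elementary but essential bookkeeping fact that the two sequences stay in the correct sets. By hypothesis $x_0\in C_1$, and an immediate induction using $x_{i+1}=P_{C_1}(\hat{x}_i)$ shows $x_i\in C_1$ for all $i$, while $\hat{x}_i=P_{C_2}(x_i)\in C_2$ for all $i$ by construction. These memberships are precisely what make the previous iterate available as a legitimate competitor in the minimization defining the next iterate.

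With this in hand, each inequality reduces to one line. For the right-hand inequality, since $x_{i+1}=P_{C_1}(\hat{x}_i)$ is a nearest point of $C_1$ to $\hat{x}_i$ and $x_i\in C_1$ is an admissible competitor, the projection property gives $\|x_{i+1}-\hat{x}_i\|\le\|x_i-\hat{x}_i\|=\|\hat{x}_i-x_i\|$. For the left-hand inequality, since $\hat{x}_{i+1}=P_{C_2}(x_{i+1})$ is a nearest point of $C_2$ to $x_{i+1}$ and $\hat{x}_i\in C_2$ is an admissible competitor, the same property gives $\|\hat{x}_{i+1}-x_{i+1}\|\le\|\hat{x}_i-x_{i+1}\|=\|x_{i+1}-\hat{x}_i\|$. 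Chaining these two estimates yields the asserted bound.

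There is essentially no hard step here; the entire content is selecting the correct competitor in each projection. The only point that demands a word of care is the nonuniqueness of projections onto the nonconvex set $\mathcal{M}$, so one should state the inequalities for any nearest-point selection and invoke the finite-dimensional compactness argument above to guarantee that a nearest point exists at all.
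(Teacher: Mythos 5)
Your proof is correct: both inequalities follow exactly as you say, by using the previous iterate ($x_i\in C_1$, resp.\ $\hat{x}_i\in C_2$) as a competitor in the minimization that defines the next projection, and your care about nonuniqueness and mere existence of nearest points onto the nonconvex set $\mathcal{M}$ is well placed. The paper itself gives no proof of this theorem---it simply cites Orsi (2006)---and your argument is the standard one used there, so there is nothing to compare beyond noting that you have supplied the omitted details correctly.
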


\begin{theorem}~\cite[Proposition 2.1]{Helmke1994}\label{ThmHelmke}
The set $\mathcal{M}$ of all real $m\times n$ matrices having singular values $\sigma_1,\ldots,\sigma_r$ occurring with multiplicities $(n_1,\ldots,n_r)$ as 
\begin{equation*}
\mathcal{M} = \{U\Sigma V^\top \in\mathbb{R}^{m\times n} | U\in\mathcal{O}(m), V\in\mathcal{O}(n)\},
\end{equation*}
is a smooth, compact manifold
 of dimension
\begin{equation*}
\dim(\mathcal{M}) = \left\{\begin{array}{cc} n(m-1)-\sum_{i=1}^r \dfrac{n_i(n_i-1)}{2}
 & \mbox{if } \sigma_r > 0 \\n(m-1)-\sum_{i=1}^r \dfrac{n_i(n_i-1)}{2}-n_r(\dfrac{n_r-1}{2} + (m-n)) & \mbox{if } \sigma_r = 0.\end{array}\right.
\end{equation*}

\end{theorem}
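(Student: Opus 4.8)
The plan is to recognize $\mathcal{M}$ as a single orbit of a compact Lie group action and then read off both its topology and its dimension from the orbit--stabilizer relation. Let $G = \mathcal{O}(m)\times\mathcal{O}(n)$, a compact Lie group of dimension $\binom{m}{2}+\binom{n}{2}$, act smoothly on $\mathbb{R}^{m\times n}$ by $(U,V)\cdot X = UXV^\top$. Fixing the matrix $\Sigma = \mathrm{diag}(\sigma_1,\ldots,\sigma_n)\in\mathbb{R}^{m\times n}$ carrying the prescribed singular values with their multiplicities, the set $\mathcal{M}$ is exactly the orbit $G\cdot\Sigma$. Since $G$ is compact, this orbit is a compact embedded smooth submanifold of $\mathbb{R}^{m\times n}$: the orbit map induces a smooth injective immersion of the compact homogeneous space $G/G_\Sigma$ onto $\mathcal{M}$, which is therefore a diffeomorphism onto its image. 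Here $G_\Sigma = \{(U,V)\in G : U\Sigma V^\top = \Sigma\}$ is the isotropy subgroup. This step delivers smoothness and compactness at once and reduces the dimension count to the single identity $\dim\mathcal{M} = \dim G - \dim G_\Sigma$.

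The core of the argument is the computation of $G_\Sigma$, which I would carry out by rewriting the fixing condition as $U\Sigma = \Sigma V$ and analyzing it in block form. First treat the case $\sigma_r>0$. Here $\Sigma=\left[\begin{smallmatrix} D \\ 0 \end{smallmatrix}\right]$ with $D\in\mathbb{R}^{n\times n}$ invertible and a zero $(m-n)\times n$ block below. Partitioning $U$ conformally, invertibility of $D$ together with orthogonality forces the bottom-left and hence all off-diagonal blocks of $U$ to vanish, leaving a free factor $\mathcal{O}(m-n)$ on the last $m-n$ rows. The surviving relation $U_{11}D = DV$, with $U_{11},V\in\mathcal{O}(n)$, forces $V^\top D^2 V = D^2$, so $V$ preserves the eigenspaces of $D^2$ and is block diagonal across the multiplicity blocks, i.e. $V\in\mathcal{O}(n_1)\times\cdots\times\mathcal{O}(n_r)$; since $D$ is a scalar multiple of the identity on each block, $U_{11}=V$ is then determined. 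Thus $G_\Sigma\cong\big(\prod_{i=1}^r\mathcal{O}(n_i)\big)\times\mathcal{O}(m-n)$, of dimension $\sum_{i=1}^r\binom{n_i}{2}+\binom{m-n}{2}$. Subtracting from $\dim G$ and using the bookkeeping identity $\binom{m}{2}+\binom{n}{2}-\binom{m-n}{2}=n(m-1)$ yields the first branch of the formula.

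For the case $\sigma_r=0$ I would repeat the block analysis with $\Sigma=\left[\begin{smallmatrix} D' & 0 \\ 0 & 0 \end{smallmatrix}\right]$, where $D'$ carries only the positive singular values and has size $p=n-n_r$. The decisive structural change is that the $n_r$ directions attached to the zero singular value are no longer coupled between $U$ and $V$: the corresponding bottom-right block of $U$ is free in $\mathcal{O}(m-p)=\mathcal{O}(m-n+n_r)$ and that of $V$ is free in $\mathcal{O}(n_r)$, while the positive part again contributes $\prod_{i=1}^{r-1}\mathcal{O}(n_i)$ with the $U$- and $V$-freedoms linked as above. Hence $\dim G_\Sigma = \sum_{i=1}^{r-1}\binom{n_i}{2}+\binom{m-n+n_r}{2}+\binom{n_r}{2}$; comparing with the case $\sigma_r>0$ and expanding $\binom{m-n+n_r}{2}=\binom{m-n}{2}+(m-n)n_r+\binom{n_r}{2}$ shows that the enlargement of the $U$-factor together with the new $\mathcal{O}(n_r)$ freedom in $V$ raises the stabilizer dimension by exactly $n_r\big(\tfrac{n_r-1}{2}+(m-n)\big)$, producing the second branch of the formula.

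The main obstacle is the isotropy computation in the degenerate case: one must correctly account for how the $m-n$ excess rows merge with the $n_r$ zero-singular-value columns into a single larger orthogonal factor, and must verify that on each positive block the orthogonal freedoms in $U$ and $V$ are forced to coincide, contributing one $\mathcal{O}(n_i)$ rather than two. Both points rest on the invertibility of the positive part together with orthogonality, so I would justify the vanishing of the relevant off-diagonal blocks carefully before any dimension counting, and I would double-check the merger by tracking which rows and columns of $U$ and $V$ can interact once $D'$ has been stripped off.
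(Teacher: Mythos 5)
Your proof is correct: the identification of $\mathcal{M}$ as the orbit of $\Sigma$ under $\mathcal{O}(m)\times\mathcal{O}(n)$, the computation of both isotropy groups (including the merging of the $m-n$ excess rows with the $n_r$ zero-singular-value directions into $\mathcal{O}(m-n+n_r)$ when $\sigma_r=0$), and the binomial bookkeeping all check out. The paper offers no proof of its own---it quotes the result from \cite{Helmke1994}---and the cited source establishes it by essentially the same orbit--stabilizer analysis, so you have in effect reconstructed the standard argument.
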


Of course, the set $\mathcal{M}_g$ is compact as a direct consequence of Theorem~\ref{ThmHelmke}. 
Also, it is true that the set $\mathcal{D}_{g}\cap\mathcal{P}_{g}$
is closed. Following from Theorem~\ref{thm:Oris} and~\ref{ThmHelmke},  
the distance between successive projections
onto $\mathcal{M}_g$ 
and $\mathcal{D}_{g}\cap\mathcal{P}_{g}$
is nonincreasing. In such a case, there is no guarantee that the iteration in Algorithm~\ref{alg_38} 
will terminate.  It is natural then to discuss the following convergence property.
\begin{theorem}\label{ThmCov}
Suppose that $x_i$'s and $\hat{x}_i$'s are successive projections between two closed sets $C_1$ and $C_2$, that is,
\begin{equation*}
\hat{x}_i= P_{C_2}(x_i), \quad x_{i+1} = P_{C_1}(\hat{x}_i).
\end{equation*}
If the set $C_1$ is compact and $P_{C_2}$ is a continuous operation, then 
\begin{equation}\label{EqCov}
\|x - \hat{x}\| = \lim\limits_{i\rightarrow \infty}\|x_i - \hat{x}_i\|
\end{equation}
for some $x\in {C_1}$ and its projection $\tilde{x}\in C_2$. 

%
%
\end{theorem}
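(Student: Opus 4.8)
The plan is to combine the monotonicity established in Theorem~\ref{thm:Oris} with a standard compactness-and-continuity argument. First, I would observe that Theorem~\ref{thm:Oris} guarantees that the sequence of distances $\{\|x_i - \hat{x}_i\|\}_{i}$ is nonincreasing and bounded below by zero. By the monotone convergence theorem for real sequences, this sequence therefore converges to some limit $d^\ast \geq 0$; this already fixes the value of the right-hand side of~\eqref{EqCov}, so the remaining task is to exhibit a point $x\in C_1$ whose distance to its projection realizes $d^\ast$.

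Second, I would exploit the compactness of $C_1$. Since every $x_i$ lies in $C_1$ and $C_1$ is compact, the sequence $\{x_i\}$ admits a convergent subsequence $x_{i_k} \to x$ with $x \in C_1$. The continuity of the projection operator $P_{C_2}$ then yields $\hat{x}_{i_k} = P_{C_2}(x_{i_k}) \to P_{C_2}(x) =: \hat{x}$, where $\hat{x}\in C_2$ is by definition the projection of $x$ onto $C_2$ (so that $\tilde{x} = \hat{x}$ in the notation of the statement).

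Third, I would transfer the limit through the norm. Because the norm is continuous, $\|x_{i_k} - \hat{x}_{i_k}\| \to \|x - \hat{x}\|$. On the other hand, $\{\|x_{i_k} - \hat{x}_{i_k}\|\}$ is a subsequence of the convergent sequence $\{\|x_i - \hat{x}_i\|\}$ and hence converges to the same limit $d^\ast$. Equating the two limits gives $\|x - \hat{x}\| = d^\ast = \lim_{i\to\infty}\|x_i - \hat{x}_i\|$, which is precisely~\eqref{EqCov}.

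I expect the only delicate point to be the interplay between the subsequence and the full sequence: the argument crucially relies on the monotonicity from Theorem~\ref{thm:Oris} to ensure that the \emph{entire} distance sequence converges, so that the limit extracted along the compactness subsequence coincides with $\lim_{i\to\infty}\|x_i-\hat{x}_i\|$ rather than with a merely subsequential limit. Without that monotonicity one could only conclude the statement along a subsequence. The continuity hypothesis on $P_{C_2}$ is exactly what lets the limit pass from $x_{i_k}$ to $\hat{x}_{i_k}$, and I would invoke it explicitly at that step; the compactness hypothesis on $C_1$ is what produces the candidate limit point $x$ in the first place.
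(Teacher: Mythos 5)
Your proof is correct and follows essentially the same route as the paper's: extract a convergent subsequence from compactness of $C_1$, push the limit through $P_{C_2}$ by continuity, and use the monotonicity from Theorem~\ref{thm:Oris} to identify the subsequential limit of the distances with the limit of the full sequence. Your explicit remark on why monotonicity is needed to upgrade from a subsequential limit to the full limit is a point the paper uses implicitly but does not spell out.
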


\begin{proof}
Since $C_1$ is compact and $x_i \in C_1$, 
$\{x_i\}$ has a convergent subsequence, say  $\{x_{i_k}\}$.
Denote that 
\begin{equation}\label{EqOri}
\lim\limits_{k\rightarrow \infty} x_{i_k} = x
\end{equation}
for some $x\in C_1$.
Thus, the fact that the projection $P_{C_2}$ is a continuous operation implies that
$
\lim\limits_{k\rightarrow \infty} P_{C_2}(x_{i_k}) = P_{C_2}(x), 
$
that is, 
\begin{equation}\label{Eqhat}
\lim\limits_{k\rightarrow \infty} \hat{x}_{i_k} = \hat{x}, 
\end{equation}
where $\hat{x}= P_{C_2}(x)\in{C}_2$.
By Theorem~\ref{thm:Oris}, $\{\|{x}_i-\hat{x}_i\|\}$ is a nonincreasing sequence and, hence, converges. It then follows from~\eqref{EqOri} and~\eqref{Eqhat} that 
\begin{equation*}
\lim\limits_{i\rightarrow \infty} \|x_{i}-\hat{x}_i\|  = \lim\limits_{k\rightarrow \infty} \|x_{i_k}-\hat{x}_{i_k}\| = \|x-\hat{x}\|. 
\end{equation*}
 

\end{proof}

In the discussion leading up to Theorem~\ref{ThmCov}, we then have the following Corollary, which is similar to the discussion of the NIEP in~\cite[Theorem 5.6]{Orsi2006}, for the NISVP.
\begin{corollary}\label{CorORSI}
Suppose that $x_i$'s is a sequence of successive projections onto the compact $\mathcal{M}_g$. 
Then, there is a limit point $x$ of the sequence $\{x_i\}$ satisfies 
\begin{equation*}
\|x - P_{\mathcal{D}_{g}\cap\mathcal{P}_{g}}(x)\| = \lim\limits_{i\rightarrow \infty}\|x_i - P_{\mathcal{D}_{g}\cap\mathcal{P}_{g}}(x_i)\|.
\end{equation*}
\end{corollary}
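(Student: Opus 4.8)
The plan is to read this Corollary as the direct specialization of Theorem~\ref{ThmCov} obtained by taking $C_1=\mathcal{M}_g$ and $C_2=\mathcal{D}_g\cap\mathcal{P}_g$, so that the entire argument reduces to verifying the two hypotheses of that theorem for this particular pair of sets. First I would record that $C_1=\mathcal{M}_g$ is compact; this is exactly the content of Theorem~\ref{ThmHelmke}, as already noted in the text preceding the statement, so nothing new is required there. Since the sequence $\{x_i\}$ lies in the compact set $\mathcal{M}_g$, it admits a convergent subsequence whose limit $x\in\mathcal{M}_g$ will serve as the claimed limit point.

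Second, and this is where the real work lies, I would establish that $P_{C_2}=P_{\mathcal{D}_g\cap\mathcal{P}_g}$ is a continuous, single-valued operation. The key observation is that $\mathcal{D}_g$ is an affine subspace and $\mathcal{P}_g$ is the nonnegative orthant in $\mathbb{R}^{m\times n}$, so each is convex; hence their intersection $\mathcal{D}_g\cap\mathcal{P}_g$ is a closed convex set. For a nonempty closed convex subset $C$ of a finite-dimensional Hilbert space, here $\mathbb{R}^{m\times n}$ with the Frobenius inner product, the metric projection $P_C$ is uniquely defined and nonexpansive, that is, $\|P_C(u)-P_C(v)\|\le\|u-v\|$ for all $u,v$, and nonexpansiveness yields continuity at once. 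I expect this to be the main obstacle, not because the argument is deep but because it is the only place where a genuine structural property of the constraint set, namely its convexity and the resulting well-posedness of its projection, must be invoked rather than merely quoted. One minor point to check here is that $\mathcal{D}_g\cap\mathcal{P}_g$ is nonempty, so that the projection is defined along the whole iteration; this is guaranteed whenever the NISVP is solvable and is in any case required for the successive-projection scheme of Algorithm~\ref{alg_38} to be well posed.

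With these two facts in hand the Corollary follows immediately from Theorem~\ref{ThmCov}. Applying that theorem with the identifications above produces a limit point $x\in\mathcal{M}_g$ of $\{x_i\}$, together with its projection $\hat{x}=P_{\mathcal{D}_g\cap\mathcal{P}_g}(x)\in\mathcal{D}_g\cap\mathcal{P}_g$, for which
\begin{equation*}
\|x - P_{\mathcal{D}_{g}\cap\mathcal{P}_{g}}(x)\| = \lim_{i\to\infty}\|x_i - P_{\mathcal{D}_{g}\cap\mathcal{P}_{g}}(x_i)\|.
\end{equation*}
Finally, the monotonicity of the distances $\{\|x_i-\hat{x}_i\|\}$ furnished by Theorem~\ref{thm:Oris} guarantees that the limit on the right-hand side exists, so the stated equality is meaningful and the proof is complete.
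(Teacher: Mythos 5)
Your proposal is correct and follows exactly the route the paper intends: the paper gives no separate proof of Corollary~\ref{CorORSI}, presenting it as an immediate specialization of Theorem~\ref{ThmCov} with $C_1=\mathcal{M}_g$ (compact by Theorem~\ref{ThmHelmke}) and $C_2=\mathcal{D}_g\cap\mathcal{P}_g$. Your added verifications --- that $\mathcal{D}_g\cap\mathcal{P}_g$ is nonempty, closed, and convex, so its metric projection is single-valued and nonexpansive, hence continuous --- merely make explicit what the paper leaves implicit, and they are accurate.
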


Corollary~\ref{CorORSI} has a very pleasant interpretation in terms of the analysis of convergence, i.e., if the limit is zero, then every limit point of a subsequence of $\{x_i\}$ is a solution of the NISVP 
with prescribed singular values and diagonal elements.

In fact, another analogous result of NIEP in~\cite[Theorem 5.7]{Orsi2006} for the result of the NISVP algorithm is also true. Since the proof of the theorem is almost identical, we omit the proof here.
\begin{theorem}
Suppose  
$\mathcal{M}_{g}\cap(\mathcal{D}_{g}\cap\mathcal{P}_{g})$ 
is not empty. Then there is an open neighborhood of 
$\mathcal{M}_{g}\cap
(
\mathcal{D}_{g}\cap\mathcal{P}_{g})^\circ$ 
from which Algorithm~\ref{alg_38} 
converges in a single iteration to a solution of the NISVP. 
Here, the notation $^\circ$ denotes the interior of a set.  
\end{theorem}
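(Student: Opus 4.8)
The plan is to exploit the fact that any point lying in $\mathcal{M}_g\cap(\mathcal{D}_g\cap\mathcal{P}_g)^\circ$ is already a solution sitting in the \emph{interior} of the convex feasible set, so that the single projection onto $\mathcal{M}_g$ performed at the top of the loop in Algorithm~\ref{alg_38} can be forced to land back inside $\mathcal{D}_g\cap\mathcal{P}_g$. If $\mathcal{M}_g\cap(\mathcal{D}_g\cap\mathcal{P}_g)^\circ$ is empty the assertion is vacuous, so I would assume it contains a point $x^\ast$. Since $\mathcal{D}_g\cap\mathcal{P}_g$ is closed and convex and $x^\ast$ lies in its interior, there is a radius $r>0$ with the open ball $B(x^\ast,r)\subseteq \mathcal{D}_g\cap\mathcal{P}_g$.

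The heart of the argument is the local behaviour of the projection onto $\mathcal{M}_g$. By Theorem~\ref{ThmHelmke}, $\mathcal{M}_g$ is a smooth compact manifold, so it possesses a tubular neighbourhood on which the nearest-point map $P_{\mathcal{M}_g}$ (realised in Algorithm~\ref{alg_38} through the singular value decomposition, as justified by Theorem~\ref{thm:best}) is single-valued and continuous; moreover $P_{\mathcal{M}_g}(x^\ast)=x^\ast$ because $x^\ast\in\mathcal{M}_g$. Invoking continuity at $x^\ast$, I would produce an open neighbourhood $N_{x^\ast}$ of $x^\ast$ with $P_{\mathcal{M}_g}(N_{x^\ast})\subseteq B(x^\ast,r)\subseteq \mathcal{D}_g\cap\mathcal{P}_g$.

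With this in hand the single-step conclusion is immediate. Take any starting matrix $x_0\in N_{x^\ast}$. The first operation of the loop yields $y:=P_{\mathcal{M}_g}(x_0)$, which by construction belongs to $\mathcal{M}_g\cap(\mathcal{D}_g\cap\mathcal{P}_g)$ and is therefore a solution of the NISVP. The subsequent projection onto $\mathcal{D}_g\cap\mathcal{P}_g$ then returns $y$ unchanged, since the projection of a point already contained in a convex set is the point itself; hence $A_{old}=A_{new}=y$ and the stopping test $\|A_{old}-A_{new}\|/\|A_{old}\|<\epsilon$ is met after one pass. Finally, setting $N:=\bigcup_{x^\ast}N_{x^\ast}$, the union taken over all $x^\ast\in\mathcal{M}_g\cap(\mathcal{D}_g\cap\mathcal{P}_g)^\circ$, produces the desired open neighbourhood of the interior solution set from which Algorithm~\ref{alg_38} converges in a single iteration.

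The main obstacle is the step asserting that $P_{\mathcal{M}_g}$ is single-valued and continuous near $x^\ast$. Globally $\mathcal{M}_g$ is nonconvex and its metric projection is multivalued, so one cannot appeal to convexity as one does for $\mathcal{D}_g\cap\mathcal{P}_g$; instead the argument must rest on the smoothness and compactness furnished by Theorem~\ref{ThmHelmke} to guarantee a genuine tubular neighbourhood, together with the observation that the SVD-based selection used by the algorithm agrees with the nearest-point map near a manifold point. This is precisely where local constancy of the singular-value multiplicities, and hence continuity of the SVD-based projection, must be controlled, and it is the point at which the present proof parallels that of~\cite[Theorem 5.7]{Orsi2006}, the only change being that the isospectral manifold there is replaced by the singular value manifold $\mathcal{M}_g$.
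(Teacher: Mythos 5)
Your proposal reaches the correct conclusion, but by a genuinely different and substantially heavier route than the one the paper intends. The paper in fact gives no argument of its own: it defers to the proof of \cite[Theorem 5.7]{Orsi2006}, which needs neither single-valuedness nor continuity of the projection onto $\mathcal{M}_g$. The whole point there is a two-line triangle inequality that is robust to the nonconvexity you worry about: if $x^\ast\in\mathcal{M}_g\cap(\mathcal{D}_g\cap\mathcal{P}_g)^\circ$ and $B(x^\ast,r)\subseteq\mathcal{D}_g\cap\mathcal{P}_g$, take the neighborhood $B(x^\ast,r/2)$; for any starting point $x_0$ in it and \emph{any} selection $y$ of the (possibly multivalued) metric projection onto $\mathcal{M}_g$ --- in particular the SVD-based one, which Theorem~\ref{thm:best} guarantees is a nearest point --- one has $\|y-x_0\|=\mathrm{dist}(x_0,\mathcal{M}_g)\leq\|x_0-x^\ast\|<r/2$, hence $\|y-x^\ast\|<r$, so $y\in\mathcal{M}_g\cap(\mathcal{D}_g\cap\mathcal{P}_g)$ is already a solution, the subsequent projection fixes it, and the stopping test is met; the union of the balls $B(x^\ast,r/2)$ over all such $x^\ast$ gives the desired open neighborhood. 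This dissolves entirely what you correctly identify as the main obstacle of your own approach. That said, your tubular-neighborhood argument can be completed: Theorem~\ref{ThmHelmke} gives a smooth compact submanifold of $\mathbb{R}^{m\times n}$, so a tube with a single-valued continuous nearest-point map exists, and on that tube the SVD-based output of Theorem~\ref{thm:best} must \emph{coincide} with the tube projection by uniqueness of the nearest point --- so no separate control of singular-value multiplicities or of SVD continuity is actually needed, contrary to your closing caveat. What your route buys, once repaired this way, is the extra information that the algorithm's first projection is continuous near interior solutions; what the paper's (Orsi-style) route buys is an elementary, selection-independent proof from the defining property of a metric projection alone. One shared caveat worth noting: since $\mathcal{D}_g$ pins the diagonal entries, the ambient interior $(\mathcal{D}_g\cap\mathcal{P}_g)^\circ$ is empty, so as literally stated the theorem is vacuous --- your opening remark about vacuity quietly covers this, but the statement is only substantive under a reading of $^\circ$ analogous to the purely nonnegative setting of \cite{Orsi2006}.
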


\section{Numerical experiments}
In this section, we demonstrate 
by numerical examples how Algorithm~\ref{alg_38} 
can be applied to construct the solution of NISVP with prescribed diagonal elements and, in general, with prescribed entries. 
All computations here are performed in MATLAB/version~2013a on a desktop with a 3.5 GHZ Intel Core i7 processor and 32GB main memory. For both algorithms, the stopping criterion is 
\begin{equation*}
\frac{\|A_{old} - A_{{new}}\|}{\|A_{old}\|} < 10^{-14}. 
\end{equation*}

\begin{example}
To illustrate the feasibility of our approach against problems of relatively large size, we being with a set of singular values and a set of diagonal elements of different size, say $n = 5, 10, 100$. For each size, we consider $100$ experiments.To demonstrate the robustness of our approach, the test data and initial values are randomly generated with entries from a uniform distribution over the interval $[0,10]$. 

Reported in Table~\ref{tab4.1}
 is the minimal number of iterations (minit), the maximal number of iterations (maxit), the average number of iterations (aveit), the minimal time (mint), the maximal time (maxt), the average time (avet) and the success rate (sucr) to find a solution by Algorithm~\ref{alg_38}. 
As can be observed, Algorithm~\ref{alg_38} is a highly efficient and effective method for solving the NISVP. What is even remarkable is that the number of iterations is not monotonically increasing
as the size of matrices increases. This might be due to the fact that 
the degrees of freedom of the problem increase as the size of matrices increases.

\begin{table}[h!!!]\label{tab4.1}
\begin{center}
\begin{tabular}
{|c|c|c|c|c|c|c|c|}
\hline   $n\times n$ \rm{matrices}  & \rm{minit} & \rm{maxit} & \rm{aveit} & \rm{mint} & \rm{maxt} & \rm{avet} & \rm{\%sucr} \\
\hline 5 & 31 & 419&73 & 0.0011 & 0.0155 & 0.0026  & 100 \\
\hline 10 & 115 & 3454 &390 & 0.0073 & 0.2622 & 0.0264  & 100 \\
\hline 20 & 48 & 164 &72 & 0.0098 & 0.0311 & 0.0154  & 100 \\
\hline 100 & 49 & 95&62 & 0.1847 & 0.3560 & 0.2464  & 100 \\\hline 
\end{tabular}
\caption{Comparison of the required minit, maxit, aveit, mint, maxt, avet and sucr for solving the NISVP by Algorithm~\ref{alg_38}.}
\end{center}
\end{table}

%
\end{example}

 \begin{example}
In this example, we illustrate the application of our approach to construct a nonnegative matrix with prescribed singular values and a specific structure--that is, the NISVP requires the solution to have certain entries in particular positions. This is the so-called the NISVP with prescribed entries (PENISVP)~\cite{Chu05} and can be defined as follows.

\vskip .1in
\begin{minipage}{0.9\textwidth}
\rm{(}\textbf{PENISVP}\rm{)} 
Given a certain subset $\mathcal{I} = \{(i_t,j_t)\}_{t=1}^\ell$ of double subscripts, a certain set of nonnegative numbers $\mathcal{K}  = \{k_t\}_{t=1}^\ell$, and a set of $n$ nonnegative numbers $\{\sigma_i\}_{i=1}^n$, find a nonnegative $m\times n$ matrix $A$, $m\geq n$, having singular values $\sigma_1,\ldots,\sigma_n$ and $A_{i_t,j_t} = k_t$, for $t = 1,\ldots, \ell$.

\end{minipage}
\vskip .1in

So far as we know, no much work has been done for this problem. For this problem, there are lots of interesting questions worthy of investigation. 
Our methodology used here is to first consider a set $\mathcal{L}$ denoted by 
\begin{equation}
\mathcal{L} = \{A \in\mathbb{R}^{m\times n} | A_{i_t,j_t} = k_t\, \mbox{for}\, 1\leq t\leq \ell\},
\end{equation}
that is, $\mathcal{L}$
is the associated set of nonnegative matrices with entries
specified by the sets  $\mathcal{I}$ and $\mathcal{K}$.
Generalizing what we observed in Theorem~\ref{ThmDiagonal}, we have the following Theorem for solving PENISVP.
\begin{theorem}\label{ThmPrescribed}
Given $A\in\mathbb{R}^{m\times n}$, let $A_p$
be a matrix defined by 
\begin{equation*}
(A_k)_{i,j}  = \left\{\begin{array}{rc} A_{ij}, & \mbox{if}\, (i, j)\neq (i_t,j_t ), \\
k_t,
 & \mbox{if}\, (i, j) =  (i_t,j_t ).\end{array}\right.
\end{equation*}
Then $A_k$ is the best approximation in $\mathcal{L}$ to $A$ in the Frobenius norm.
\end{theorem}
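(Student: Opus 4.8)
The plan is to reprise the proof of Theorem~\ref{ThmDiagonal} almost verbatim, since the set $\mathcal{L}$ is defined by constraints that decouple completely across matrix entries. First I would fix an arbitrary $X\in\mathcal{L}$ and expand the squared Frobenius distance as a sum over all index pairs, splitting it into the prescribed positions $\mathcal{I}=\{(i_t,j_t)\}_{t=1}^\ell$ and the remaining free positions:
\begin{equation*}
\|X-A\|^2 = \sum_{(i,j)\notin\mathcal{I}} |X_{ij}-A_{ij}|^2 + \sum_{t=1}^{\ell} |k_t - A_{i_t,j_t}|^2.
\end{equation*}
The second sum is the crucial term: because every $X\in\mathcal{L}$ satisfies $X_{i_t,j_t}=k_t$, this sum is a fixed constant, independent of the particular choice of $X$. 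Hence minimizing $\|X-A\|^2$ over $\mathcal{L}$ is equivalent to minimizing only the first sum.

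The first sum is a sum of independent nonnegative terms, one per free position, and each term $|X_{ij}-A_{ij}|^2$ is minimized to zero precisely by taking $X_{ij}=A_{ij}$. Since the free entries of a matrix in $\mathcal{L}$ may be chosen arbitrarily and independently, this entrywise choice is simultaneously attainable, and the resulting matrix is exactly $A_k$ as defined in the statement. Thus $A_k\in\mathcal{L}$ and it achieves the minimum, so it is a best approximation to $A$ in $\mathcal{L}$. I would also remark that $\mathcal{L}$ is a closed, convex (in fact affine) subspace, so the best approximation is unique, which is consistent with the fact that the entrywise minimizer is uniquely determined.

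There is essentially no hard step here; the argument is a routine generalization of Theorem~\ref{ThmDiagonal}, recovered by specializing $\mathcal{I}$ to the diagonal positions. The only points deserving a word of care are bookkeeping rather than mathematics: one should note that the positions in $\mathcal{I}$ are distinct, so the prescription $A_{i_t,j_t}=k_t$ is non-conflicting and $A_k$ is well-defined and genuinely lies in $\mathcal{L}$; and one should make sure the index-set partition into $\mathcal{I}$ and its complement correctly accounts for every entry exactly once. With these observations in place, the separability of the objective across coordinates delivers the result immediately.
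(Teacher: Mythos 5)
Your proof is correct and follows essentially the same route as the paper's: split $\|X-A\|^2$ into the sum over the prescribed positions $(i_t,j_t)$, which is constant for every $X\in\mathcal{L}$, and the sum over the free positions, which is minimized entrywise by $X_{ij}=A_{ij}$, yielding $A_k$. Your added remarks on well-definedness and uniqueness (via convexity of $\mathcal{L}$) go slightly beyond the paper's argument but do not change the approach.
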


\begin{proof}
Finding $X\in\mathcal{L}$ to best approximate $A$ is equivalent to minimizing
\begin{equation}\label{EQP}
 \|X-A\|^2 = \sum_{ (i, j)\neq (i_t,j_t )} |X_{ij}-A_{ij}|^2
 + \sum_{ (i, j)= (i_t,j_t )} |X_{ij}-A_{ij}|^2.  
 \end{equation}
Note that for the matrix $A$, 
the distance  $\sum_{ (i, j)= (i_t,j_t )} |X_{ij}-A_{ij}|^2
= \sum_{ (i, j)= (i_t,j_t )} |k_t-A_{ij}|^2
$ is a contant. It follows immediately from~\eqref{EQP} that $A_k$ 
is the closest element in $\mathcal{L}$ to $A$.
\end{proof}

This implies that   
to solve the PENISVP is equivalent to 
\begin{equation}
\mbox{find}\,\, X\in \mathcal{M}_{g}\cap\mathcal{P}_{g}\cap\mathcal{L}.
\end{equation}
In this case, we can easily solve the PENISVP by replacing line 1.8 
in Algorithm~\ref{alg_38} 
with the following statement
\begin{equation}\label{PEN}
(A_{new})_{i_t,j_t } \leftarrow k_t, \quad  \mbox{for all }t.
\end{equation}

The example experimented here is borrowed from~\cite{Chu05}
with a minor adjustment
for demonstrating the capacity of solving non-square and nonnegative matrices with arbitrarily prescribed entries. 
It is to find a $6\times 5$ matrix with the structure
\begin{equation*}
A = \left[\begin{array}{ccccc}a_{11} & 1 & 0 & 1 & 0 \\a_{21} & a_{22} & a_{23} & a_{24} & 1 \\0 & 0 & a_{33} & 1 & 0 \\0 & a_{42} & a_{43} & a_{44} & 1 \\a_{51} & a_{52} & a_{53} & a_{54} & a_{55}
\\a_{61} & a_{62} & a_{63} & a_{64} & a_{65}
\end{array}\right],
\end{equation*}
where $a_{ij}\mbox{'s}\geq 0$ are entries needed to be determined corresponding to (presumedly randomly generated) singular values 
$\{0.2780, 0.9786,0.5334,1.2723,3.3108\}$.
To facilitate our illustration, assume the singular values 
have been arranged in the decreasing order such that 
$\sigma_1 = 3.3108,\sigma_2=1.2723,
\sigma_3 = 0.9786, \sigma_4 = 0.5334$ and $\sigma_5 = 0.2780
$. 
Upon applying Algorithm~\ref{alg_38} with the modification in~\eqref{PEN}, we obtain an example of a nonnegative matrix 
\begin{equation*}
A = \left[\begin{array}{ccccc}
    0.2414 &   1   &     0   & 1    &     0\\
    0.9400  &  1.3034  &  0.6074 &   0.6016 &   1\\
         0    &     0 &   0.8033  &  1    &     0\\
         0  &  0.4300  &  0.3211 &   0.3613  &  1\\
             0.3117  &  0.9784  &  0.9789  &  0.7802 &   0.8089\\
    0.5346  &  0.6585  &  0.1276  &  0.1697  &  0.6513\\
\end{array}\right]
\end{equation*}
having the desired singular values and the specific structure.

 \end{example}

 \begin{example}
In this example, we show that Algorithm~\ref{alg_38} works indiscriminately with a nonnegative symmetric matrix. To demonstrate this situation, suppose that $\{   19.1654,   7.9790,    6.4456,    0.6405,$    $0.3812\}$ are the prescribed singular values
and the desired matrix is taken the form 
 \begin{equation*}
A = \left[\begin{array}{ccccc}1 & a_{12}& a_{13} & a_{14} & a_{15} 
\\a_{12} & 1 & a_{23} & a_{24} & a_{25} 
\\a_{13} & a_{23} &1 & a_{34} & a_{35} 
\\a_{14} & a_{24} & a_{34} & 1 & a_{45} 
\\a_{15} & a_{25} & a_{35} & a_{45} & 1
\end{array}\right],
\end{equation*}

Applying our method to the above requirements, we do obtain an example of a symmetric nonnegative matrix 
\begin{equation*}
A = \left[\begin{array}{ccccc}
     1  &  2.8642  &  5.4181  &  0.9486  &  3.4543\\
    2.8642 &   1&    2.7116 &   2.6081   & 7.4045\\
    5.4181  &  2.7116  &  1  &  5.6720   & 5.9509\\
    0.9486   & 2.6081  &  5.6720&    1   & 6.8615\\
    3.4543  &  7.4045  &  5.9509 &   6.8615   & 1
\end{array}\right]
\end{equation*}
having the desired singular values and diagonal entries.  

 \end{example}

\section{Conclusion}
In this paper we have proposed the sufficient and necessary conditions to solve the NISVP for $2\times 2$ matrices, given its principal diagonal elements. For general nonnegative matrices, we have observed how the very challenging and complicated NISVP with prescribed structure can be handled using the successive projection technique. In particular, the case with arbitrarily prescribed entries has not even been considered in the literature. It is of both theoretical and practical interest to see that using the successive projection technique, we are able to solve the very difficult NISVP simply, effectively and efficiently. In other words, this approach offers a unified and effectual avenue of attack to many other ISPs, which we think deserves attention from practitioners in this field.

On the other hand, we feel that the research area of PENISVP is still quite open. It will be interesting to explore this type of problems with other structures such as stochastic, Toeplitz, Hankel matrices, and so on.
Especially, stochastic matrices with prescribed entries have applications in the study of Markov chains since the specified entries may represent certain probability flows under consideration. In addition, finding sufficient and necessary conditions for the existence of a matrix of size larger than $2$
is also worthy of further investigation.

\bibliographystyle{siam}
\bibliography{qiep}
\end{document}